\newtheorem{lemma}{Lemma}
\newtheorem{corollary}{Corollary}
\newtheorem{proposition}{Proposition}
\newtheorem{remark}{Remark}
\newenvironment{preuve}{\vip \noindent {\it Proof}}{\hfill$\square$\vip}
\newcommand{\vip}{\vskip.2cm}
\newcommand{\COMMENTAIRE}[1]{}
\begin{document}

\title[Control the difference between Brownian motions for energy markets]{On the control of the difference between two Brownian motions: an application to energy markets modeling}

\author{Thomas Deschatre}

\address{Thomas Deschatre, CEREMADE,
Universit\'e Paris-Dauphine, Place du mar\'echal De Lattre de Tassigny
75775 Paris Cedex 16, France.}

\email{thomas.deschatre@gmail.com}

\begin{abstract} We derive a model based on the structure of dependence between a Brownian motion and its reflection according to a barrier. The structure of dependence presents two states of correlation: one of comonotonicity with a positive correlation and one of countermonotonicity with a negative correlation. This model of dependence between two Brownian motions $B^1$ and $B^2$ allows for the value of $\mathbb{P}\left(B^1_t - B^2_t \geq x\right)$ to be higher than $\frac{1}{2}$ when $x$ is close to 0, which is not the case when the dependence is modeled by a constant correlation. It can be used for risk management and option pricing in commodity energy markets. In particular, it allows to capture the asymmetry in the distribution of the difference between electricity prices and its combustible prices. 
\end{abstract}

\maketitle

\textbf{Mathematics Subject Classification (2010)}: 60J25, 60J60, 60J65, 60J70, 60H10, 62H99.

\textbf{Keywords}: Brownian motion, Copula, Asymmetry, Difference, Coupling, Barrier, Local correlation, Energy, Electricity, Commodities, Risk.

\section{Introduction}

\subsection{Motivation}

One of the major issues in commodity energy markets is the pricing and hedging of multi-assets options, in particular the spread options. For instance, if we denote by $X_t$ the price of electricity at time $t$, by $Y_t$ the price of coal at time $t$, and by $H$ the heat rate (conversion factor) between the two, the income of the coal plant can be modeled by $\left(X_t - HY_t -K\right)^+$, with $x^+ = \max\left(x,0\right)$ and $K$ representing a fixed cost. To evaluate the value of this coal plant, one needs to model jointly the price of electricity and the price of coal. Because the coal is a fuel for electricity, the two prices can not be considered independent and dependence between the two needs to be modeled. For more information on spread options, the reader can refer to \citep{carmona03}.

\medskip
To model energy commodities forward prices, two different approaches exist: the first one consists in the modeling of the spot price and is equivalent to the Vasicek modeling of interest rates \citep{vasicek77}, the second one consists in the modeling of the forward curve and is similar to the a Heath-Jarrow-Morton approach \citep{heath92}. In the first approach, one way to model dependence is the use of structural models \citep{aid09, aid13, carmona14}. In structural models, electricity is a function of the residual demand and of the fuels used to produce it. Some constraints are imposed in order for the electricity price to be higher than the minimum price of its combustibles with a high probability.  An other way to model dependence is the use of co-integration between the different commodities spot prices \citep{nakajima12}. However, structural models and co-integration models are very computational costly and are not adapted for practitioners. They prefer to use the second type of models, using a forward curve. We denote by $f^i\left(t,T\right)$ the forward price of commodity $i$ at time $t$ with maturity $T$, that is of the delivery of commodity $i$ during one unit of time. The most common model for $f^i\left(t,T\right)$ is the two-factor model, see \citep{benth08} for instance. The forward price of commodity $i$, $i = 1, .., n$ is modeled by the following stochastic differential equation:
\begin{equation} \label{factosde}
df^i\left(t,T\right) = f^i\left(t,T\right) \left(\sigma^i_s e^{-\alpha^i\left(T-t\right)} dB^{s,i}_t + \sigma^i_l dB^{l,i}\right)
\end{equation}
with $B^{s,i}$ and $B^{l,i}$, $i = 1, .., n$,  $2n$ brownian motions. The dependence between the Brownian motions is usually modeled by a constant correlation matrix. In the following, we are interested only in factorial models with two commodities, electricity and one of its fuel. Marginals model (if we consider only one commodity) are really efficient and allow us to price efficiently options based on one underlying. However, dependence modeling is not satisfying because it does not capture the asymmetry in the distribution of the difference between the forward price of electricity and the one of its fuel. Furthermore, the probability for the price of electricity to be lower than the price of its fuel is closed to $\frac{1}{2}$ which is not consistent with the reality. Indeed, the fuel is used to produce the electricity.    

\medskip 
Modeling the dependence between the forward prices is equivalent to the modeling of the dependence between the Brownian motions. We consider only two Brownian motions. To capture asymmetry, it is needed to consider an other approach than the constant correlation model. A common approach to construct a pair of Brownian motions is the use of stochastic correlation. Stochastic correlation models are a generalization in a multivariate framework of stochastic volatility models, such as the Heston model \citep{heston93} where the volatility is modeled by a Cox-Ingersoll-Ross process. The matrix of volatility-correlation is stochastic and can be modeled for instance by a Wishart processes \citep{gourieroux09}. In a stochastic correlation framework, as the difference between the two Brownian motions does not follow a normal law, it is possible to capture asymmetry. However, if the stochastic correlation $\left(\rho_s\right)_{s \geq 0}$ is independent from the two Brownian motions, we have for $x \geq 0$:
\[\mathbb{P}\left(B^1_t - B^2_t \geq x\right) = \mathbb{E}\Bigl(\Phi\Bigl( \frac{-x}{\sqrt{2\int_0^t \left(1-\rho_s\right)ds}} \Bigr) \Bigr)\leq \frac{1}{2}\]
with $\Phi$ the normal cumulative distribution function. Stochastic correlation does not allow to have higher value then $\frac{1}{2}$ for $\mathbb{P}\left(B^1_t - B^2_t \geq x\right)$. An other way to construct a pair of Brownian motions is the use of a local correlation. The concept of local correlation is directly derived from the one of local volatility. In a Black and Scholes framework, the volatility is constant with the maturity and strikes which is not coherent with the implied volatilities from call and put option prices. Dupire introduces the local volatility in order to have a price model which is compatible with the volatility smiles and which is a complete market model \citep{dupire94}. Langnau introduces local correlation model which is the generalization of local volatility for a multi-dimensional framework \citep{langnau10}. A less common approach is the use of copulae. Copulae are used to model the dependence between random variables and have many applications in finance \citep{cherubini04}. Indeed, Sklar's theorem \citep{sklar59} states that modeling the distribution of a couple of random variables $\left(X,Y\right)$ is equivalent than modeling the law of $X$, the law of $Y$ and a copula function $C$ corresponding to the dependence between the two. However, use of copulae is more complicated in a continuous time framework, that is when processes are involved. In \citep{bosc12} and \citep{jaworski13}, a partial derivative equation is derived linking the copula between two Brownian motions and their local correlation function based on  the Kolmogorov forward equation. Constraints on the copula to be admissible for Brownian motions are very restrictive, especially if one want to find asymmetric copulae admissible for Brownian motions. Deschatre \citep{deschatre16} derives families of copula that are admissible for Brownian motions and asymmetric. Furthermore, he studies the range of the function $C \mapsto \mathbb{P}_C\left(B^1_t - B^2_t \geq \eta\right)$ for $\eta  > 0$ and $t > 0$ where $B^1$ and $B^2$ are two Brownian motions and $\mathbb{P}_C$ denotes the measure of probability when $C$ is the copula of $\left(B^1,B^2\right)$. Some Markovian constraints are imposed on the copula $C$. The range of this function is equal to $\left[0, 2\Phi\left(\frac{-\eta}{2\sqrt{t}}\right)\right]$ and the supremum is achieved with the copula between the Brownian motion and its reflection according to the barrier $\frac{\eta}{2}$. However, those results are not adapted to a modeling framework because of the degenerescence of the model: the Brownian motions are either correlated to 1 or to -1 depending on the value of $B^1$.

\subsection{Objectives and results} The main objective of this paper is to construct a model of dependence for solutions $f^1\left(t,T\right)$ and $f^2\left(t,T\right)$ of the stochastic differential equations \eqref{factosde}. This model of dependence must create asymmetry in the difference between the two processes.  In particular, we want to have high value for $\mathbb{P}\left(f^1\left(t,T\right) - f^2\left(t,T\right)\geq x\right)$ with $x$ close to 0. The dependence between the two processes is determined by the dependence between the Brownian motions. We reduce our problem to the construction of two Brownian motions $B^1 = \left(B^1_t\right)_{t \geq 0}$ and $B^2 = \left(B^2_t\right)_{t \geq 0}$ presenting asymmetry in their dependence and with values for $\mathbb{P}\left(B^1_t - B^2_t \geq x\right)$ higher than $\frac{1}{2}$ when $x$ is close to 0. Our model is based on the work of Deschatre \citep{deschatre16}. The value of $\mathbb{P}\left(B^1_t - B^2_t \geq \eta\right)$ for $\eta > 0$ is maximized when $B^2$ is the reflection of $B^1$ according to the barrier $\frac{\eta}{2}$. The copula between those two Brownian motions presents two states of dependence: one of comonotonicity corresponding to a correlation of 1 and one of countermonotonicity corresponding to a correlation of -1. We release these two states of dependence by allowing lower correlations in absolute value. This gives the copula of Proposition \ref{nondegenerated}. This copula is asymmetric and Proposition \ref{survival2} gives the survival function of difference between the two Brownian motions coupled with this copula:
\[\mathbb{P}\left(B^1_t - B^2_t \geq x \right) = \Phi\Bigl(\frac{-x+2\rho h}{\sqrt{2\left(1-\rho\right)t}}\Bigr) \Phi\Bigl(\frac{x-2h\left(1+\rho\right)}{\sqrt{2\left(1+\rho\right)t}}\Bigr) + \Phi\Bigl(\frac{2h-x}{\sqrt{2\left(1-\rho\right)t}}\Bigr) \Phi\Bigl(\frac{-x}{\sqrt{2\left(1+\rho\right)t}}\Bigr).\]
This model of dependence gives higher values for $\mathbb{P}\left(B^1_t - B^2_t \geq x \right)$ than the constant correlation case and than $\frac{1}{2}$ when $x$ close to 0 and for $\rho$ high enough.

\medskip
We generalize this model by allowing several reflections: it is the multi-barrier correlation model. We define two barriers $\nu$ and $\eta$ with $\nu < \eta$. We consider two independent Brownian motions $X$ and $B^Y$, and we construct the Brownian motion $Y^n$ that is correlated to $\tilde{X}^n$: 
\[Y^n = \rho\tilde{X}^n + \sqrt{1 - \rho^2}B^Y,\]
with $\tilde{X}^n$ the Brownian motion equal to $-X$ at the beginning and reflecting when $X-Y^n$ hits a two-state barrier equal to $\eta$ before the first reflection and switching from $\eta$ to $\nu$ or from $\nu$ to $\eta$ at each reflection. For a given $x \in \left[\eta,\nu\right]$ and $t >0$, Corollary \ref{convergencesurvival} states that the sequence $\mathbb{P}\left(X_t - Y^n_t \geq x\right)$ is increasing with $n$. Furthermore, the number of reflections in $\left[0,t\right]$ $N_t$ is finite almost surely, see Proposition \ref{model} (iii). We then consider the process $Y_t = Y_t^{N_t}$ which is a Brownian motion, see Proposition \ref{model} (iv), that corresponds to the case $n \rightarrow \infty$. Proposition \ref{spreaddistrib} gives the survival function of $X_t-Y_t$, which is higher than in the constant correlation case and higher than $\frac{1}{2}$ when $x \in \left[\eta,\nu\right]$ and $\rho$ is high enough. This model can be transposed to a local correlation model:
\[
\left \{
\begin{array}{l}
   dX_t = dB^X_t \\
   dY_t = \tilde{\rho}\left(X_t - Y_t\right) dB^{X}_t + \sqrt{1-\tilde{\rho}\left(X_t - Y_t\right)^2} dB^Y_t\\
\end{array}
\right.
\]
with $\tilde{\rho}$ a Lipschitz function such that $\underset{x \in \mathbb{R}}{\sup} |\tilde{\rho}\left(x\right) | < 1$, $\tilde{\rho}\left(x\right) = \rho_1$ if $x \leq \nu$ and $\tilde{\rho}\left(x\right) = \rho_2$ if $x \geq \eta $. This system of stochastic differential has a strong solution $\left(X,Y\right)$, see Proposition \ref{solutionsde}. This model seems to be equivalent to the multi-barrier model when the two barriers have close values and $\rho_2 = -\rho_1 = \rho$. The solution has the advantage to be Markovian.

\medskip
The multi-barrier correlation model is applied to the factorial model \eqref{factosde} in order to model jointly forward prices of electricity and forward prices of coal. Empirical results show that the model works well for products with a long delivery maturity (3 Month Ahead and 6 Month Ahead): the difference between the two products has an asymmetric distribution and the probability for the electricity product  to be higher than the coal one is high. However, it is not the case for products with a short delivery maturity, such as the spot. This can be explained by a difference of volatility too high between the electricity spot price and the coal spot price. Indeed, the electricity and coal volatilities of the long term factors that drives the prices of long maturity products are close to each other whereas they are very different for the short term factors. An other limitations of our model is that it is highly sensitive to initial conditions, that is the initial prices of electricity and coal products. We also estimate prices of European spread options in our model with Monte Carlo. Results are the same in the local correlation model.

\subsection{Structure of the paper} In Section \ref{twocorrelation}, we provide a first model to construct two Brownian motions with a two-state correlation structure based on the dependence between a Brownian motion and its reflection. We give a closed formula for the survival function of the difference between the two Brownian motions: the distribution of the difference is asymmetric and can take higher values than in the constant correlation case. In Section \ref{multibarriercorrelationmodel}, we improve the model of Section \ref{twocorrelation} by allowing several reflections to construct a multi-barrier correlation model. We give results about the survival function between the two Brownian motions and show that it takes higher values than the one in the model of Section \ref{twocorrelation}. We also derive a local correlation model which gives the same results than the multi-barrier correlation model. Section \ref{multibarriercorrelationmodel} is our major contributions. In Section \ref{application}, we apply our results to the modeling of the forward prices of two commodities which are electricity and coal and to the pricing of spread options. Proofs are given in Section \ref{proofs}.

\section{A two-state correlation copula}
\label{twocorrelation}

In this section, we derive a copula based on the Brownian motion and its reflection according to a barrier. As seen in \citep{deschatre16}, this copula contains two states depending on the value of the difference between the two Brownian motions: one of comonotonicity and one of countermonotonicity, that is correlation equal to 1 and -1. This copula maximizes $\mathbb{P}\left(B_t^1-B_t^2 \geq \eta \right)$ when the barrier is equal to $\frac{\eta}{2}$, see \cite[Proposition 3]{deschatre16}. However, the dependence between the two Brownian motions when it is modeled by these copulae is degenerated in the sense that the difference between the two Brownian motions becomes constant in an infinite horizon. In this section, we construct a copula which does not present this degeneracy but which allows higher values for $\mathbb{P}\left(B_t^1-B_t^2 \geq \eta \right)$ than in the Gaussian copula case. The idea is to relax the correlation: instead of having states of correlation with correlations equals to 1 and -1, we have states of correlation with correlations equals to $\rho$ and $-\rho$, $| \rho | < 1$. 

\subsection{Model} \label{modeltwocorrelation}

Let us consider a filtered probability space ($\Omega$, $\mathcal{F}$, $\left(\mathcal{F}_t\right)_{t \geq 0}$, $\mathbb{P}$) with $\left(\mathcal{F}_t\right)_{t \geq 0}$ satisfying the usual hypothesis (right continuity and completion) and $B^1 = \left(B^1_t\right)_{t \geq 0}$ a Brownian motion adapted to $\left(\mathcal{F}_t\right)_{t \geq 0}$. We denote by $\tilde{B}^{h}$ the Brownian motion reflection of $B$ on $x = h$ with $h \in \mathbb{R}$, i.e. $\tilde{B}^{h}_t =  - B^1_t + 2 (B^1_t - B^1_{\tau^h}){\bf1}_{t \geq \tau^h}$ with $\tau^h =  \inf \{t  \geq 0 : B^1_t = h \}$. Thus, $\tilde{B}^{k}$ is a $\mathcal{F}$ Brownian motion according to the reflection principle (see \cite[Theorem\ 3.1.1.2, p.\ 137]{jeanblanc09}). Let $\rho \in \left(0,1\right)$ and $Z$ a Brownian motion independent from $B^1$. We consider the stochastic process $B^2 = \rho \tilde{B}^{h}_t + \sqrt{1-\rho^2}Z$, which is a Brownian motion by L\'evy characterisation. 

\subsection{The copula} Let us recall that a function $C: \left[0,1\right]^2 \mapsto \left[0,1\right]$ is a copula if:
\begin{enumerate}
\item[(i)] $C$ is 2-increasing, i.e. $C\left(u_2,v_2\right) - C\left(u_1,v_2\right) + C\left(u_1,v_1\right) - C\left(u_2,v_1\right) \geq 0 \text{ for }  u_2 \geq u_1, v_2 \geq v_1$ and $u_1, u_2, v_1, v_2 \in \left[0,1\right]$,
\item[(ii)]  $C\left(u,0\right) = C\left(0,v\right) = 0$, $u, v \in \left[0,1\right]$,
\item[(iii)]  $C\left(u,1\right) = u, C\left(1,u\right) = u$,  $u \in \left[0,1\right]$.
\end{enumerate}
According to Skar's theorem \citep{sklar59}, if $X$ and $Y$ are two random variables with continuous distribution function $F^X$ and $F^Y$, there exists an unique copula $C$ such that $\mathbb{P}\left(X \leq x, Y \leq y\right) = C\left(F^X\left(x\right), F^Y\left(y\right)\right)$. We will call $C$ the copula of $\left(X,Y\right)$. 
\smallskip

In the following, we will denote by $\Phi$ the cumulative distribution function of a standard normal random variable and by $\Phi_{\rho}$ the cumulative distribution function of a bivariate gaussian vector of two standard normal random variables correlated with correlation $\rho$, $\rho \in \left(-1,1\right)$.

\medskip
Proposition \ref{nondegenerated} gives the copula between $B^1$ and $B^2$. 

\begin{proposition}[Proposition 3 of \citep{deschatre16}]  \label{nondegenerated}
Let $h > 0$, $t > 0$ and $\rho \in \left(0,1\right)$. The copula 
\[C_t(u,v) = \left\lbrace
\begin{array}{ccc}
\Phi_{\rho}\Bigl(\Phi^{-1}\left(u\right), \Phi^{-1}\left(v\right)+\frac{2\rho h}{\sqrt{t}}\Bigr) + v - \Phi\Bigl(\Phi^{-1}\left(v\right)+\frac{2\rho h}{\sqrt{t}}\Bigr) & \mbox{if} & \hspace{-0.5em} u  \geq \Phi\Bigl(\frac{h}{\sqrt{t}}\Bigr) \\
 \Phi_{-\rho}\Bigl(\Phi^{-1}\left(u\right),\Phi^{-1}\left(v\right)\Bigr) + \Phi_{\rho}\Bigl(\Phi^{-1}\left(u\right) - \frac{2h}{\sqrt{t}}, \Phi^{-1}\left(1-v\right)-\frac{2\rho h}{\sqrt{t}}\Bigr) +\\
   \Phi_{\rho}\Bigl(\Phi^{-1}\left(u\right)-\frac{2h}{\sqrt{t}}, \Phi^{-1}\left(v\right)\Bigr) -\Phi\Bigl(\Phi^{-1}\left(u\right)-\frac{2h}{\sqrt{t}}\Bigr)& \mbox{if} &\hspace{-0.5em} u  < \Phi\Bigl(\frac{h}{\sqrt{t}}\Bigr),
\end{array}\right.
\]
is the copula between $B^1_t$ and $B^2_t$ at time $t$ which are defined in the model of Section \ref{modeltwocorrelation}.
\end{proposition}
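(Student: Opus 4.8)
The plan is to compute the joint distribution function $G(x,y):=\mathbb{P}(B^1_t\le x,\,B^2_t\le y)$ explicitly and then read off the copula. Since $B^1_t/\sqrt{t}$ is standard normal and $B^2_t/\sqrt{t}$ is standard normal as well (because $B^2$ is a Brownian motion by the L\'evy characterisation used in Section \ref{modeltwocorrelation}), we have $G(x,y)=C_t\bigl(\Phi(x/\sqrt t),\Phi(y/\sqrt t)\bigr)$, so it suffices to identify $G$ and then substitute $\Phi^{-1}(u)=x/\sqrt t$, $\Phi^{-1}(v)=y/\sqrt t$. The first observation is that, writing $M_t:=\sup_{0\le s\le t} B^1_s$, on the event $\{\tau^h>t\}=\{M_t<h\}$ the reflected process satisfies $\tilde B^h_t=-B^1_t$, whereas on $\{\tau^h\le t\}=\{M_t\ge h\}$ it satisfies $\tilde B^h_t=B^1_t-2h$.

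Next I would condition on the path of $B^1$. As $Z$ is a Brownian motion independent of $B^1$, the variable $Z_t$ is independent of $(B^1_t,M_t)$, so
\[
G(x,y)=\mathbb{E}\Bigl[\mathbf{1}_{\{B^1_t\le x\}}\,\Phi\Bigl(\tfrac{y-\rho\tilde B^h_t}{\sqrt{(1-\rho^2)t}}\Bigr)\Bigr]
=\mathbb{E}\Bigl[\mathbf{1}_{\{B^1_t\le x,\,M_t<h\}}\,\Phi\Bigl(\tfrac{y+\rho B^1_t}{\sqrt{(1-\rho^2)t}}\Bigr)\Bigr]
+\mathbb{E}\Bigl[\mathbf{1}_{\{B^1_t\le x,\,M_t\ge h\}}\,\Phi\Bigl(\tfrac{y-\rho(B^1_t-2h)}{\sqrt{(1-\rho^2)t}}\Bigr)\Bigr].
\]
By the reflection principle, $\mathbb{P}(B^1_t\in\mathrm{d}z,\,M_t\ge h)=\tfrac1{\sqrt t}\phi\bigl(\tfrac{2h-z}{\sqrt t}\bigr)\mathrm{d}z$ for $z<h$ (and the ordinary normal density for $z\ge h$), while $\mathbb{P}(B^1_t\in\mathrm{d}z,\,M_t<h)=\tfrac1{\sqrt t}\bigl[\phi(z/\sqrt t)-\phi\bigl(\tfrac{2h-z}{\sqrt t}\bigr)\bigr]\mathrm{d}z$ for $z<h$. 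Plugging these in, $G(x,y)$ becomes a finite sum of one-dimensional integrals of the type $\int\Phi(\text{affine in }z)\,\phi(\text{affine in }z)\,\mathrm{d}z$; on the pieces carrying the reflected density $\phi((2h-z)/\sqrt t)$ one applies the change of variable $w=2h-z$, which turns that density into $\phi(w/\sqrt t)$, replaces $B^1_t=z$ by $2h-w$, and flips the sign in front of $\rho$. Each resulting integral is evaluated with the elementary identity
\[
\int_{-\infty}^{a}\Phi(\beta+\gamma u)\,\phi(u)\,\mathrm{d}u\;=\;\Phi_{r}\!\Bigl(a,\;\tfrac{\beta}{\sqrt{1+\gamma^2}}\Bigr),\qquad r:=-\tfrac{\gamma}{\sqrt{1+\gamma^2}},
\]
(together with its upper-tail form, which is what produces the $\Phi^{-1}(1-v)$ argument), obtained by recognising the left-hand side as $\mathbb{P}(U\le a,V\le b)$ for a suitably scaled bivariate normal pair.

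Doing this separately in the two regimes yields the two branches of the formula. When $x\ge h$, i.e. $u\ge\Phi(h/\sqrt t)$, the constraint $B^1_t\le x$ is automatically satisfied on $\{M_t<h\}$, and after adding and subtracting $\mathbb{E}[\mathbf{1}_{\{B^1_t\le x\}}\Phi((y-\rho(B^1_t-2h))/\sqrt{(1-\rho^2)t})]$ the two expectations collapse to $\Phi_{\rho}(\Phi^{-1}(u),\Phi^{-1}(v)+2\rho h/\sqrt t)+v-\Phi(\Phi^{-1}(v)+2\rho h/\sqrt t)$, which is the first line. When $x<h$ the constraint is active, all the $\Phi_{\rho}$ and $\Phi_{-\rho}$ terms with the $2h/\sqrt t$ and $2\rho h/\sqrt t$ shifts survive, and after a few applications of the standard symmetry relations $\Phi_{-\rho}(a,b)=\Phi(b)-\Phi_{\rho}(-a,b)$ and $\Phi_{\rho}(-a,-b)=\Phi_{\rho}(a,b)-1+\Phi(-a)+\Phi(-b)$ one arrives at the four-term second line. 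The main obstacle is precisely this bookkeeping: tracking which integration limits are active in which regime, applying $w=2h-z$ so that the correlation signs and the $\Phi^{-1}(1-v)$ argument come out exactly right, and compressing the pieces into the stated compact form. Conditions (i)--(iii) of a copula need no separate verification, since $C_t$ is built as the copula of a genuine pair of random variables with continuous marginals; alternatively, one may simply observe that $(B^1,B^2)$ is exactly the pair considered in \cite[Proposition 3]{deschatre16} and invoke that result.
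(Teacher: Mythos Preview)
The paper does not actually prove this proposition: it is quoted as Proposition~3 of \cite{deschatre16} and no proof is given in Section~\ref{proofs}. Your direct computation is a valid approach and, in fact, uses precisely the toolkit the paper deploys for the neighbouring Proposition~\ref{survival2}: conditioning on $\{M_t<h\}$ versus $\{M_t\ge h\}$, the reflection-principle densities of Lemma~\ref{lawsup}, the integral identity of Lemma~\ref{intphi}(i) (which is exactly your $\int_{-\infty}^a\Phi(\beta+\gamma u)\phi(u)\,du$ formula), and the bivariate-normal symmetry relations of Lemma~\ref{intphi}(ii)--(iii). Your observation that on $\{M_t<h\}$ the constraint $B^1_t\le x$ is automatic when $x\ge h$ is the right way to see why the first branch collapses. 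The only caveat is that you leave the final algebraic reduction as ``bookkeeping''; that reduction is genuinely tedious (as the proof of Proposition~\ref{survival2} in the paper illustrates), but there is no conceptual gap. Your closing remark that one may simply invoke \cite[Proposition~3]{deschatre16} is exactly what the paper does.
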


This copula is clearly asymmetric in the sense that $C_t\left(u,v\right) \neq C_t\left(v,u\right)$ for $u, v \in \left[0,1\right], \; t > 0$ which is a necessary condition if we want to have for $x > 0$, $\mathbb{P}\left(B^1_t - B^2_t \geq x\right) \geq \frac{1}{2}$, see \cite[Proposition 2]{deschatre16}. The copula contains two states of correlation: one of positive dependence ($\rho > 0$) and one of negative dependence ($\rho < 0$). Figure \ref{RBC} gives the copula of Proposition \ref{nondegenerated} with $\rho = 0.95$ and in the degenerated case $\rho = 1$, $h = 2$ and $t = 1$.

\begin{figure}[h!]
    \centering
    \begin{subfigure}[b]{0.32\textwidth}
        \centering
        \includegraphics[width=\textwidth]{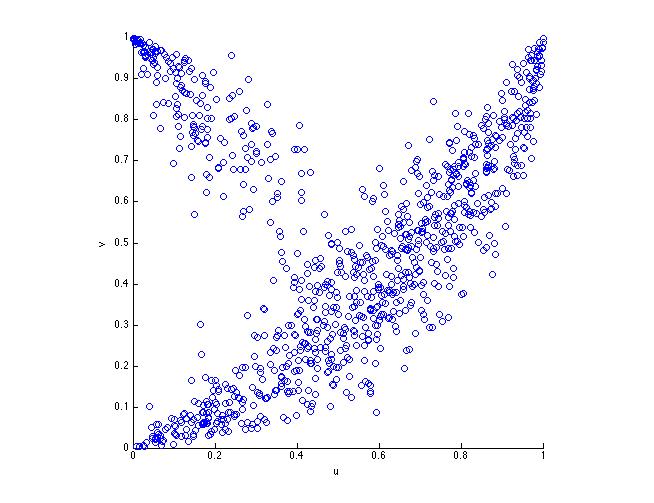}
        \caption{\label{RBCa}\it $\rho = 0.95$.}
    \end{subfigure}
    \begin{subfigure}[b]{0.32\textwidth}
        \centering
        \includegraphics[width=\textwidth]{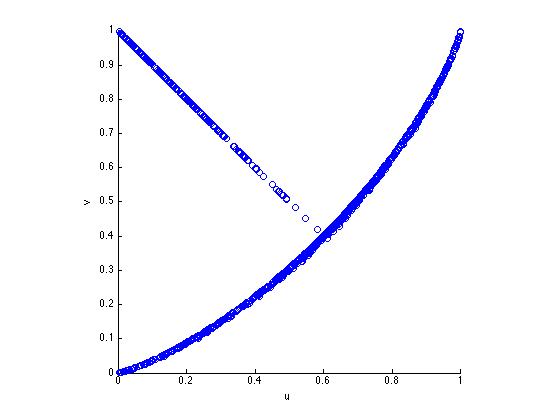}
        \caption{\label{RBCb} \it $\rho = 1$.}
        \end{subfigure}
     \caption{\label{RBC}\it Copula between a Brownian motion and Brownian motion correlated to the refection of the first one with a correlation $\rho = 0.95$ and in the degenerated case $\rho = 1$  at time $t = 1$ and a barrier $h=2$, which is the copula of Proposition \ref{nondegenerated}.}
\end{figure}

\subsection{Distribution of the difference between the two Brownian motions}

Proposition \ref{survival2} gives the survival function of $B^1_t-B^2_t$. 
\begin{proposition}  \label{survival2} Let $t > 0$, $h > 0$, $\rho \in \left(0,1\right)$ and $x \in \mathbb{R}$. Let $B^1$ and $B^2$ the two Brownian motions defined in the model of Section \ref{modeltwocorrelation}. We have: 
\[\mathbb{P}\left(B^1_t - B^2_t \geq x \right) = \Phi\Bigl(\frac{-x+2\rho h}{\sqrt{2\left(1-\rho\right)t}}\Bigr) \Phi\Bigl(\frac{x-2h\left(1+\rho\right)}{\sqrt{2\left(1+\rho\right)t}}\Bigr) + \Phi\Bigl(\frac{2h-x}{\sqrt{2\left(1-\rho\right)t}}\Bigr) \Phi\Bigl(\frac{-x}{\sqrt{2\left(1+\rho\right)t}}\Bigr).\]
\end{proposition}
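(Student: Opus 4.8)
The plan is to make the dependence of $B^1_t-B^2_t$ on the path of $B^1$ fully explicit, reduce the problem to the joint law of $B^1_t$ and its running maximum $M_t:=\sup_{0\le s\le t}B^1_s$, and finish with a short list of Gaussian integrals. From the definition of $\tilde B^h$ together with $B^1_{\tau^h}=h$ one has $\tilde B^h_s=-B^1_s$ on $\{s<\tau^h\}$ and $\tilde B^h_s=B^1_s-2h$ on $\{s\ge\tau^h\}$; moreover $\{\tau^h\le t\}=\{M_t\ge h\}$ since $h>0$ and $B^1_0=0$. Because $Z$ is independent of $B^1$, conditioning on the path of $B^1$ shows that $B^1_t-B^2_t$ is Gaussian with variance $(1-\rho^2)t$ and mean $(1+\rho)B^1_t$ on $\{M_t<h\}$ and mean $(1-\rho)B^1_t+2\rho h$ on $\{M_t\ge h\}$. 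Hence
\[
\mathbb{P}\bigl(B^1_t-B^2_t\ge x\bigr)=\mathbb{E}\!\left[\mathbf{1}_{\{M_t<h\}}\,\Phi\!\Bigl(\tfrac{(1+\rho)B^1_t-x}{\sqrt{(1-\rho^2)t}}\Bigr)\right]+\mathbb{E}\!\left[\mathbf{1}_{\{M_t\ge h\}}\,\Phi\!\Bigl(\tfrac{(1-\rho)B^1_t+2\rho h-x}{\sqrt{(1-\rho^2)t}}\Bigr)\right].
\]

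Next I would substitute the joint law of $(B^1_t,M_t)$ supplied by the reflection principle: writing $\phi_t$ for the $\mathcal{N}(0,t)$ density, $\mathbb{P}(B^1_t\in da,\,M_t<h)=(\phi_t(a)-\phi_t(2h-a))\,\mathbf{1}_{\{a<h\}}\,da$ and $\mathbb{P}(B^1_t\in da,\,M_t\ge h)=\phi_t(2h-a)\,\mathbf{1}_{\{a\le h\}}\,da+\phi_t(a)\,\mathbf{1}_{\{a>h\}}\,da$. This turns the two expectations into a sum of four integrals of the form $\int_{-\infty}^{c}\Phi(\alpha a+\beta)\,\phi_t(a-m)\,da$, each of which is evaluated with the elementary identity
\[
\int_{-\infty}^{c}\Phi(\alpha a+\beta)\,\phi_t(a-m)\,da=\Phi_{r}\!\Bigl(\tfrac{c-m}{\sqrt t},\ \tfrac{\alpha m+\beta}{\sqrt{1+\alpha^2 t}}\Bigr),\qquad r=\tfrac{-\alpha\sqrt t}{\sqrt{1+\alpha^2 t}},
\]
where $\Phi_{r}$ denotes the bivariate normal distribution function with correlation $r$; this follows by writing $\Phi(\alpha a+\beta)=\mathbb{P}(G\le\alpha a+\beta)$ for a standard normal $G$ independent of $B^1$ and identifying a bivariate Gaussian vector. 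In all four integrals $\alpha^2t\in\{\tfrac{1+\rho}{1-\rho},\tfrac{1-\rho}{1+\rho}\}$, so $\sqrt{1+\alpha^2 t}\in\{\sqrt{2/(1-\rho)},\sqrt{2/(1+\rho)}\}$, the correlations reduce to $-\sqrt{(1+\rho)/2}$ or $-\sqrt{(1-\rho)/2}$, and the arguments become exactly the quantities $\tfrac{\,\cdot\,}{\sqrt{2(1-\rho)t}}$, $\tfrac{\,\cdot\,}{\sqrt{2(1+\rho)t}}$ and $\pm\tfrac{h}{\sqrt t}$ occurring in the statement.

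The remaining, and main, step is to collapse this sum of four bivariate normal distribution functions into the two products claimed. The useful facts are the alignments
\[
\tfrac{2h(1+\rho)-x}{\sqrt{2(1+\rho)t}}-\tfrac{-x}{\sqrt{2(1+\rho)t}}=2\sqrt{\tfrac{1+\rho}{2}}\,\tfrac{h}{\sqrt t},\qquad \tfrac{2h-x}{\sqrt{2(1-\rho)t}}-\tfrac{2\rho h-x}{\sqrt{2(1-\rho)t}}=2\sqrt{\tfrac{1-\rho}{2}}\,\tfrac{h}{\sqrt t},
\]
combined with the standard identities $\Phi_{r}(u,v)+\Phi_{-r}(u,-v)=\Phi(u)$ and $\Phi_{r}(u,v)+\Phi_{-r}(-u,v)=\Phi(v)$ for the bivariate normal CDF (for instance via the addition formula for Owen's $T$ function, which is where the products $\Phi(\cdot)\Phi(\cdot)$ enter). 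Using these, the four terms carrying the correlations $\pm\sqrt{(1\pm\rho)/2}$ cancel in pairs, and what is left is precisely $\Phi\bigl(\tfrac{-x+2\rho h}{\sqrt{2(1-\rho)t}}\bigr)\Phi\bigl(\tfrac{x-2h(1+\rho)}{\sqrt{2(1+\rho)t}}\bigr)+\Phi\bigl(\tfrac{2h-x}{\sqrt{2(1-\rho)t}}\bigr)\Phi\bigl(\tfrac{-x}{\sqrt{2(1+\rho)t}}\bigr)$. I expect this algebraic simplification, not any of the probabilistic steps, to be the real obstacle: it is routine but lengthy. (One can lighten the bookkeeping on $\{M_t\ge h\}$ by applying the reflection principle at $\tau^h$ directly to the functional $(1-\rho)B^1_t+2\rho h-\sqrt{1-\rho^2}Z_t$, which replaces it by $2h-(1-\rho)B^1_t-\sqrt{1-\rho^2}Z_t$ on that event; this does not, however, remove the final identity.)
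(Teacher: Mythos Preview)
Your proposal is correct and follows essentially the same route as the paper. Both arguments split on $\{\tau^h\le t\}=\{M_t\ge h\}$, identify $B^1_t-B^2_t$ as $(1+\rho)B^1_t-\sqrt{1-\rho^2}Z_t$ or $(1-\rho)B^1_t+2\rho h-\sqrt{1-\rho^2}Z_t$ on the two events, reduce to Gaussian integrals via the joint law of $(B^1_t,M_t)$, evaluate these as bivariate normal CDFs, and then collapse to the two products using the identities $\Phi_r(u,v)+\Phi_{-r}(-u,v)=\Phi(v)$ together with a factorisation identity (your Owen's~$T$ argument, the paper's Lemma on $\Phi_{\sqrt{1-\rho^2}}$). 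The only cosmetic difference is the order of integration: the paper conditions on $Z_t$ and applies the reflection-principle formula for $(B^1_t,\sup B^1)$, whereas you condition on the path of $B^1$ first and integrate out $Z$; by Fubini this produces the same collection of $\Phi_r$-terms and the same final simplification.
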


Figure \ref{spreadonebarrier} represents the survival function of $B^1_t-B^2_t$ at time $t = 1$ and $t = 20$ with $h = 0.25$ and $\rho = 0.9$. The value of this function is close to 0.7 when $x$ is close to 0 at time $t = 1$. However, when $t = 20$, it becomes close to $\frac{1}{2}$ and the asymmetry disappears.
\begin{figure}[h!]
    \centering
    \begin{subfigure}[b]{0.49\textwidth}
        \centering
        \includegraphics[width=\textwidth]{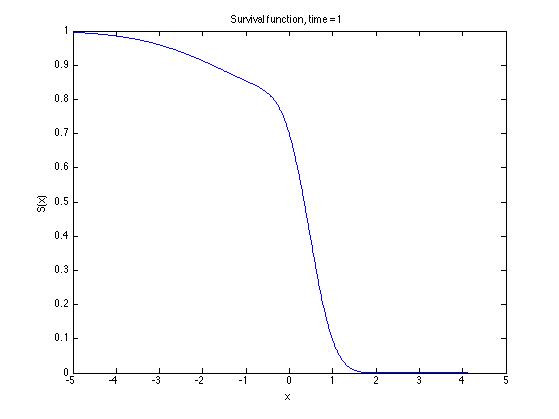}
        \caption{\label{spreadonebarriertime=1} \it $t = 1$.}
    \end{subfigure}
        \begin{subfigure}[b]{0.49\textwidth}
        \centering
        \includegraphics[width=\textwidth]{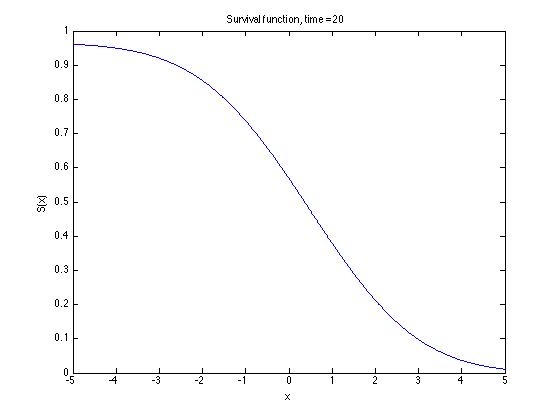}
        \caption{\label{spreadonebarriertime=20}\it $t = 20$.}
    \end{subfigure}
\caption{\label{spreadonebarrier} \it Survival function of $B_t^1 - B_t^2$ in the model of Section \ref{modeltwocorrelation} at time $t = 1$ and $t = 20$ with parameters $h = 0.25$ and $\rho = 0.9$.}
\end{figure}

This model allows us to have higher values than in the Gaussian copula case for $\mathbb{P}\left(B^1_t - B^2_t \geq z\right)$ when $z$ is close to 0. However, it presents some limitations in terms of modeling:
\begin{enumerate}
\item[(i)] $\left(B^1, \tilde{B}^h, B^2\right)$ is Markovian but the couple $\left(B,B^2\right)$ is not.
\item[(ii)]  The asymmetry disappears in the distribution of $B^1_t - B^2_t$ when $t$ becomes large.  
\item[(iii)] Let us consider the probability $\mathbb{P}\left(B^1_t - B^2_t \geq z \mid \mathcal{G}_s\right)$ with $\mathcal{G}$ the filtration generated by $\left(B^1, \tilde{B}^h, B^2\right)$. Let us suppose that the barrier has already been crossed at time $s$, i.e. $\tilde{B}^h_s = B^1_s-2h$. Thus, the correlation between $B^1$ and $B^2$ at times $t \geq s$ is equal to $\rho$ and does not change. We are in the same case than in the Gaussian copula case after time $s$, and then we do not optimize $\mathbb{P}\left(B^1_t - B^2_t \geq z \mid \mathcal{G}_s\right)$.
\end{enumerate}

\section{Multi-barrier correlation model}
\label{multibarriercorrelationmodel}

In this section, we improve the model of Section \ref{twocorrelation} by allowing several reflections. In the model of Section \ref{twocorrelation}, once the reflection has happened, the two Brownian motions stay correlated with correlation $\rho$ even if the difference between the two becomes low. We want to have two Brownian motions $X$ and $Y$ with the following correlation structure: if the value of $X-Y$ is under a certain level that we denote by $\nu$, $X$ and $Y$ have a negative correlation $-\rho$ and if it is over an other level denoted by $\eta$, their correlation is positive and equal to $\rho$. One way to obtain this structure is to start with two Brownian motions having a negative correlation. When the difference between them reaches the barrier $\eta$, $Y$ reflects and the correlation becomes positive. If the correlation is positive (resp. negative) and $X-Y$ reaches $\nu$ (resp. $\eta$), $Y$ reflects and the correlation becomes negative (resp. positive). The number of reflection that can happen is a parameter of our model denoted by $n$. $Y$ is then correlated to a reflection of $X$ reflecting each time the difference between the two reaches one of the two barriers. Figure \ref{multibarrierillustration} gives an illustration of our model. In Section \ref{localcorrelationmodel}, we develop a local correlation model based on the same principle. The local correlation model seems to be equivalent to the multi-barrier correlation model when the two barriers are close. Furthermore, in the local correlation model, the couple $\left(X,Y\right)$ is Markovian.
\begin{figure}[h!]
\centering
\includegraphics[width=0.7\linewidth]{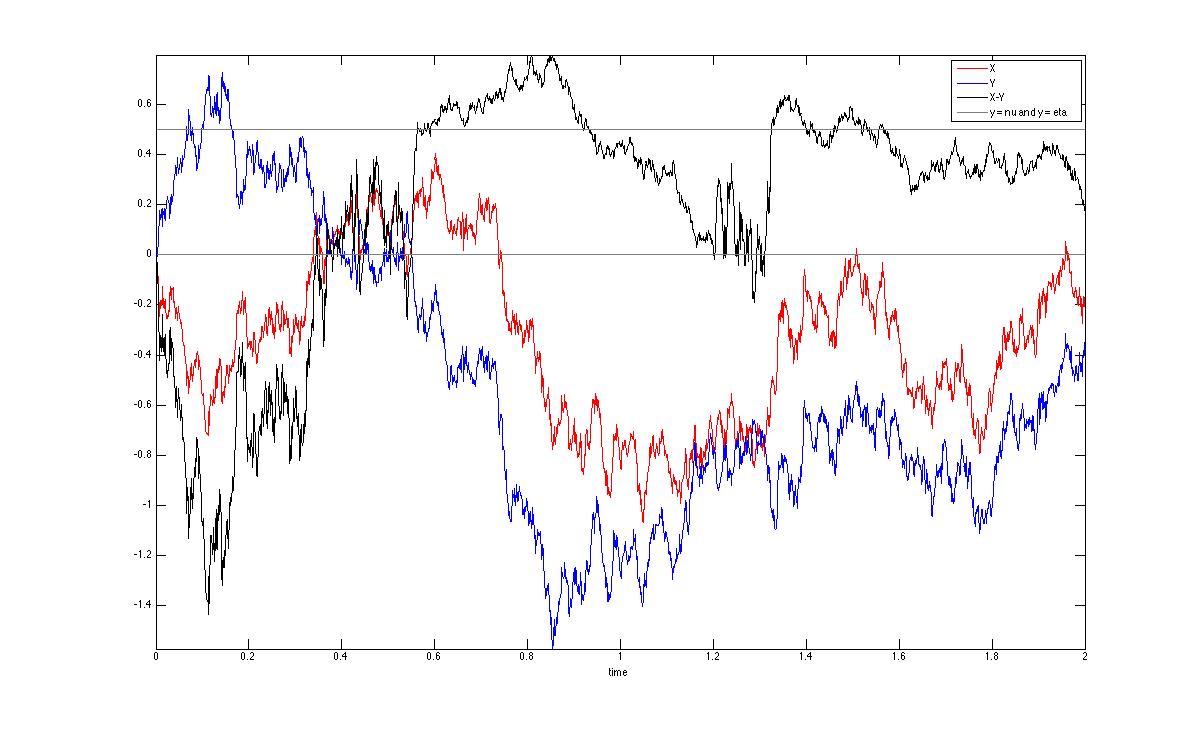}
\caption{ \label{multibarrierillustration} \it One trajectory of $X$, $Y$ and $X-Y$ in the multi-barrier correlation model with $\nu = 0$, $\eta = 0.5$, $\rho = 0.9$.}
\end{figure}

\subsection{Model}
\label{multimodel}
Let $B^X$ and $B^Y$ be two independent Brownian motions  defined on a common filtered probability space ($\Omega$, $\mathcal{F}$, $\left(\mathcal{F}_t\right)_{t \geq 0}$, $\mathbb{P}$) with $\left(\mathcal{F}_t\right)_{t \geq 0}$ satisfying the usual properties. We will denote indifferently $B^X$ by $X$.
\smallskip

\noindent Let $\eta > 0$, $\nu < \eta$ and $\rho \in \left[0,1\right]$.
\smallskip

\noindent Let $\alpha_k = \left\lbrace
\begin{array}{l}
0 \text{ if } k = 0\\
\eta \text{ if } k \text{ odd}\\
\nu \text{ if } k \text{ even, }k \neq 0 
\end{array}\right..
$
\smallskip

\noindent Let $\left(\tilde{B}^k\right)_{k \geq 0}$, $\left(Y^k\right)_{k \geq 0}$ and $\left(\tau_k\right)_{k \geq 0}$ be defined by 
\[\left\lbrace
\begin{array}{l}
\tau_0 = 0 \\
\tilde{B}^0 = -B^X\\
Y^{0}_t = \rho \tilde{B}^0 + \sqrt{1- \rho^2} B^Y
\end{array}\right.,
\]
\[\left\lbrace
\begin{array}{l}
\tau_{k} = \inf\{t \geq \tau_{k-1} : B^X_t - Y^{k-1}_t = \alpha_k\} \quad k \geq 1    \\
\tilde{B}^{k} = \mathcal{R}(\tilde{B}^{k-1}, \tau_k) \quad k \geq 1\\
Y^{k} = \rho\tilde{B}^k + \sqrt{1- \rho^2} B^Y \quad k \geq 1, 
\end{array}\right.
\]
where $\mathcal{R}(B,\tau)$ is the reflection Brownian motion of $B$ with the reflection happening at time $\tau$ and $\tau$ a stopping time, i.e. $\mathcal{R}(B,\tau)_t =  - B_t + 2 (B_t - B_{\tau}){\bf1}_{t \geq \tau}$.

Let $N_t = \sum_{n = 1}^{\infty} {\bf1}_{\tau_n \leq t}$ be the number of reflections that happened before time $t$ and $Y_t = Y_t^{N_t}$. $Y^N$ is well defined because $N_t < \infty$ almost surely according to Proposition \ref{model} (iii). Proposition \ref{model} gives results about the model.

\begin{proposition} \label{model}
\noindent (i) $\left(Y^k\right)_{k \geq 0}$ is a sequence of $\left(\mathcal{F}_t\right)_{t \geq 0}$ Brownian motions and $(\tau_k)_{k \geq 0}$ is a sequence of $\left(\mathcal{F}_t\right)_{t \geq 0}$ stopping times.

\smallskip
\noindent (ii)  For $t > 0$,
\begin{equation}
\label{valuespread}
X_t - Y^n_t =\left\lbrace  
\begin{array}{l}
\Bigl(1+\left(-1\right)^k\rho\Bigr)\left(B^X_t - B^X_{\tau_{k}}\right) - \sqrt{1-\rho^2}\left(B^Y_t - B^Y_{\tau_{k}}\right) + \alpha_k, \, \tau_{k}  \leq t \leq \tau_{k+1},\, 0 \leq k \leq n \\
\left(1+(-1)^n\rho\right)\left(B^X_t - B^X_{\tau_{n+1}}\right) - \sqrt{1-\rho^2}\left(B^Y_t - B^Y_{\tau_{n+1}}\right) + \alpha_{n+1}, \, \tau_{n+1}  \leq t 
\end{array}\right.\hspace{-0.5em}. 
\end{equation}

\smallskip
\noindent (iii) $N_t < \infty$ almost surely.

\smallskip
\noindent (iv) $Y$ is a Brownian motion.
\end{proposition}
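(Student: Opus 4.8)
The plan is to run an induction on $k$ for assertions (i) and (ii), the recurring device being L\'evy's characterisation together with a stochastic-integral representation of reflection, and then to read off (iii) and (iv) from the explicit description of $X-Y^n$ furnished by (ii). For (i) the base case $k=0$ is immediate: $\tilde B^0=-B^X$ is an $(\mathcal F_t)$-Brownian motion, $(\tilde B^0,B^Y)$ is a two-dimensional Brownian motion since $B^X$ and $B^Y$ are independent, so $Y^0=\rho\tilde B^0+\sqrt{1-\rho^2}B^Y$ is a Brownian motion by L\'evy's theorem, and $\tau_0=0$ is a stopping time. For the inductive step, assume $(\tilde B^{k-1},B^Y)$ is a two-dimensional $(\mathcal F_t)$-Brownian motion (hence $Y^{k-1}$ is $(\mathcal F_t)$-Brownian) and $\tau_{k-1}$ an $(\mathcal F_t)$-stopping time; then $B^X-Y^{k-1}$ is continuous and adapted, so $\tau_k$ --- the first hitting time of the closed set $\{\alpha_k\}$ after $\tau_{k-1}$ --- is an $(\mathcal F_t)$-stopping time under the usual conditions. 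The reflection $\tilde B^k=\mathcal R(\tilde B^{k-1},\tau_k)$ can be written $\tilde B^k=\int_0^{\cdot}\bigl(\mathbf 1_{s\le\tau_k}-\mathbf 1_{s>\tau_k}\bigr)\,d\tilde B^{k-1}_s$, a stochastic integral whose integrand is predictable with square identically $1$; hence $\tilde B^k$ is a continuous local martingale with $\langle\tilde B^k\rangle_t=t$, and $\langle\tilde B^k,B^Y\rangle_t=\int_0^{\cdot}(\pm1)\,d\langle\tilde B^{k-1},B^Y\rangle_s=0$ by the induction hypothesis. The multidimensional L\'evy characterisation then gives that $(\tilde B^k,B^Y)$ is a two-dimensional Brownian motion, hence in particular $Y^k=\rho\tilde B^k+\sqrt{1-\rho^2}B^Y$ is a Brownian motion. (This is the reflection principle of \cite[Theorem 3.1.1.2]{jeanblanc09}, but used with respect to the large filtration $(\mathcal F_t)$, since $\tau_k$ involves $B^Y$ and need not be a stopping time of the filtration generated by $B^X$ alone.)

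For (ii), I would track the sign of the increments of the running reflected Brownian motion: $\tilde B^0=-B^X$ has increment $-dB^X$, and each reflection at one of the $\tau_j$ negates it, so on $[\tau_k,\tau_{k+1}]$ with $k\le n$ (the reflections $\tau_{k+1},\dots,\tau_n$ not yet having occurred) $\tilde B^n$ evolves with increment $(-1)^{k+1}\,dB^X$, and likewise with increment $(-1)^{n+1}\,dB^X$ on $[\tau_{n+1},\infty)$. Substituting $\tilde B^n_t-\tilde B^n_{\tau_k}=(-1)^{k+1}(B^X_t-B^X_{\tau_k})$ into $X_t-Y^n_t=B^X_t-\rho\tilde B^n_t-\sqrt{1-\rho^2}B^Y_t$, and using that a reflection does not move the value at the reflection time so that $X_{\tau_k}-Y^n_{\tau_k}=X_{\tau_k}-Y^{k-1}_{\tau_k}=\alpha_k$ by the definition of $\tau_k$, yields \eqref{valuespread} after collecting terms; this is a direct computation. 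For (iii), I would argue by contradiction: on $\{N_t=\infty\}$ the $\tau_k$ are strictly increasing (since $\alpha_k\ne\alpha_{k+1}$ and $B^X-Y^k$ is continuous) and bounded above by $t$, hence converge to some $\tau_\infty\le t$, so that $\tau_{k+1}-\tau_k\to0$. But by (ii) and the triangle inequality, for every $k\ge1$,
\[
|\eta-\nu|=\bigl|(X_{\tau_{k+1}}-Y^k_{\tau_{k+1}})-(X_{\tau_k}-Y^k_{\tau_k})\bigr|\le 2\,|B^X_{\tau_{k+1}}-B^X_{\tau_k}|+2\,|B^Y_{\tau_{k+1}}-B^Y_{\tau_k}|,
\]
and the right-hand side tends to $0$ by uniform continuity of $B^X$ and $B^Y$ on $[0,t]$, contradicting $\eta\ne\nu$. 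Hence $N_t<\infty$ almost surely.

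For (iv), since $N_t<\infty$ a.s.\ the process $Y_t=Y^{N_t}_t=\rho\tilde B^{N_t}_t+\sqrt{1-\rho^2}B^Y_t$ is well defined, and it is continuous because a reflection leaves the value unchanged at the reflection time, so $t\mapsto\tilde B^{N_t}_t$ does not jump. Setting $\hat B_t:=\tilde B^{N_t}_t$, the sign-flip description from (ii) identifies $\hat B_t=\int_0^t(-1)^{N_{s-}+1}\,dB^X_s$, a stochastic integral of a bounded predictable integrand of square $1$ against $B^X$; exactly as in (i), $(\hat B,B^Y)$ is then a two-dimensional Brownian motion by L\'evy's characterisation, and therefore $Y=\rho\hat B+\sqrt{1-\rho^2}B^Y$ is a Brownian motion.

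The step I expect to need the most care is the measure-theoretic bookkeeping underlying (i) and (iv): one must consistently treat the reflected processes as $(\mathcal F_t)$-Brownian motions (not merely as Brownian motions for the filtration of $B^X$, which would be false), and one must verify rigorously that the running reflected process $t\mapsto\tilde B^{N_t}_t$ is continuous and coincides with the stochastic integral against $B^X$ with the correct bounded predictable integrand $(-1)^{N_{s-}+1}$ (here the set of reflection times, being countable, is $dB^X$-negligible, so replacing $N_{s-}$ by $N_s$ is harmless). Once these points are settled, L\'evy's theorem does the rest; the remaining assertions (ii) and (iii) are a direct computation and a short contradiction argument.
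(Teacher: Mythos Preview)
Your proposal is correct. For parts (i) and (ii) you follow essentially the paper's inductive route, only fleshed out more carefully (the paper states (i) in one line and proves (ii) by the same sign-tracking computation $\tilde B^k_t-\tilde B^k_s=(-1)^{k+1}(B^X_t-B^X_s)$).

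Parts (iii) and (iv), however, take genuinely different routes from the paper. For (iii) you argue by contradiction via uniform continuity of $B^X,B^Y$ on $[0,t]$: if $N_t=\infty$ then $\tau_{k+1}-\tau_k\to0$, which forces the increment bound to contradict $|\eta-\nu|>0$. The paper instead proves a quantitative lemma (Lemma~\ref{lawtaumodel}) identifying $\tau_k$ in law with the hitting time of an explicit level $u_k$ growing linearly in $k$ by a standard Brownian motion, whence $\mathbb P(\tau_n\le t)=2\Phi(-u_n/\sqrt t)=O(n^{-3})$ and the stronger conclusion $\mathbb E(N_t)<\infty$ follows. Your argument is shorter and self-contained; the paper's buys more, since Lemma~\ref{lawtaumodel} is reused verbatim in the proof of Proposition~\ref{spreaddistrib} (through Lemma~\ref{lawspreadtau}), so if you adopt your route for (iii) you will still need to supply that hitting-time identification separately when you reach the distributional results. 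For (iv) you exhibit $\hat B_t=\tilde B^{N_t}_t$ as the stochastic integral $\int_0^t(-1)^{N_{s-}+1}\,dB^X_s$ and invoke L\'evy again; the paper instead observes that $Y_t=Y^n_t$ for every $n\ge N_t$, so $Y$ is the a.s.\ limit of the Brownian motions $Y^n$ and inherits their finite-dimensional distributions. Your argument is cleaner and also gives that $Y$ is an $(\mathcal F_t)$-Brownian motion, not merely a Brownian motion in law.
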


Figure \ref{copulamultibarrier} is the empirical copula of $(X_t, Y^n_t)$ for different $n$ at time $t = 1$. The copula is asymmetric and we observe two states of correlation, as for the model of Section \ref{twocorrelation}.

\begin{figure}[h!]
    \centering
    \begin{subfigure}[b]{0.45\textwidth}
        \centering
        \includegraphics[width=\textwidth]{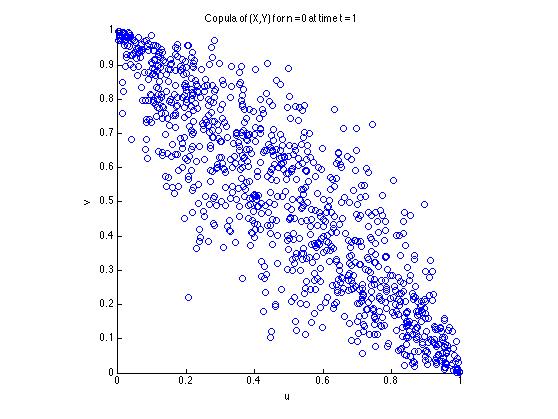}
        \caption{\it $n = 0$.}
    \end{subfigure}
    \begin{subfigure}[b]{0.45\textwidth}
        \centering
        \includegraphics[width=\textwidth]{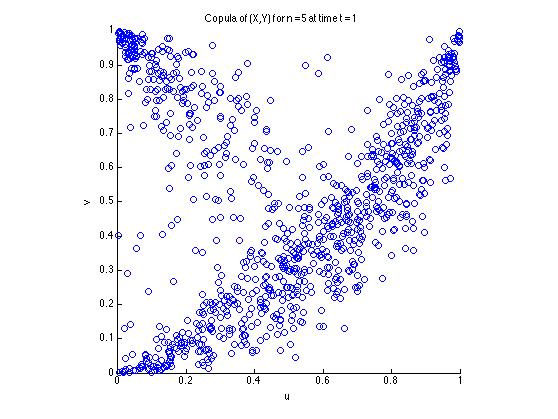}
        \caption{\it $n = 5$.}
    \end{subfigure}
        \begin{subfigure}[b]{0.45\textwidth}
        \centering
        \includegraphics[width=\textwidth]{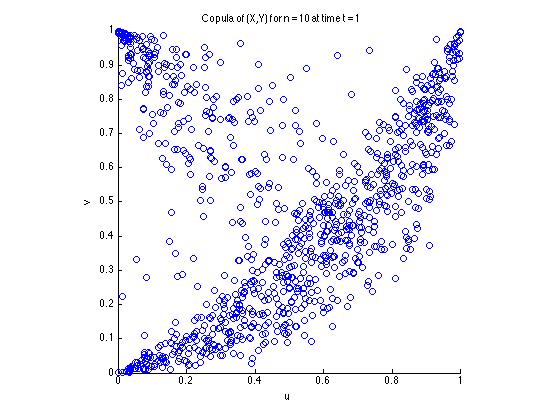}
        \caption{\it $n = 10$.}
    \end{subfigure}
        \begin{subfigure}[b]{0.45\textwidth}
        \centering
        \includegraphics[width=\textwidth]{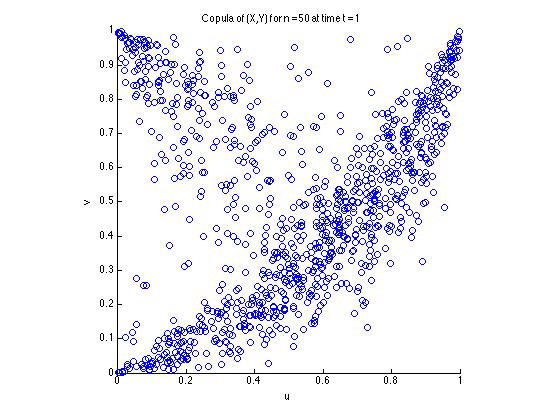}
        \caption{\it $n = 50$.}
    \end{subfigure}
    \caption{\label{copulamultibarrier} \it Empirical copula of $(X,Y^n)$ in the multi-barrier correlation model at time $t = 1$ with parameters $\nu$ = 0, $\eta = 0.5$ and $\rho = 0.9$ and a time step of 0.001 for different values of $n$ done with 1000 simulations.}
\end{figure}

\subsection{Results on the distribution of the difference between the two Brownian motions}

Proposition \ref{spreaddistrib} gives an analytic formula for the survival function for $X_t - Y_t^n$ and $X_t - Y_t$.

\medskip
\begin{proposition} \label{spreaddistrib}
Let $t > 0$ and $x \in \mathbb{R}$. Let $\left(p_n\left(t,x\right)\right)_{n \geq 0}$ the sequence defined by:
\begin{equation}
\label{spreadrecini}
p_0(t,x) = \Phi\Bigl(\frac{-x}{\sqrt{2\left(1+\rho\right)t}}\Bigr),
\end{equation}
\begin{equation}
\label{spreadrec}
p_{n}(t, x) = \left\lbrace
\begin{array}{ccc}
\Phi\Bigl(\frac{x-\alpha_{n+1}}{\sqrt{2\left(1+\left(-1\right)^n\rho\right)t}} - \frac{u_{n+1}}{\sqrt{t}}\Bigr) - \Phi\Bigl(\frac{x-\alpha_{n+1}}{\sqrt{2\left(1+\left(-1\right)^{n+1}\rho\right)t}} - \frac{u_{n+1}}{\sqrt{t}}\Bigr) & \mbox{if} & x < \alpha_{n+1}\\
\Phi\Bigl(\frac{x-\alpha_{n+1}}{\sqrt{2\left(1+\left(-1\right)^n\rho\right)t}} + \frac{u_{n+1}}{\sqrt{t}}\Bigr) - \Phi\Bigl(\frac{x-\alpha_{n+1}}{\sqrt{2\left(1+\left(-1\right)^{n+1}\rho\right)t}} + \frac{u_{n+1}}{\sqrt{t}}\Bigr) & \mbox{if} & x \geq \alpha_{n+1}
\end{array}\right.
\end{equation}
where $\left(u_n\right)_{n\geq0}$ is the sequence defined by:
\[\label{uk}
\left\lbrace
\begin{array}{l}
u_0 = 0\\
u_{n} = \frac{\eta}{\sqrt{2\left(1+\rho\right)}} + \frac{\left(\eta - \nu\right)}{\sqrt{2}}\Bigl(\frac{\lfloor \frac{n}{2} \rfloor}{\sqrt{1-\rho}} + \frac{\lfloor \frac{n-1}{2} \rfloor}{\sqrt{1+\rho}}\Bigr) \quad k \geq 1  \\
\end{array}\right.
\]
and $\lfloor . \rfloor$ is the floor function.

\medskip 
We have:
\[\mathbb{P}\left(X_t - Y^n_t \geq x \right) = \sum_{k=0}^{n} p_k\left(t,x\right)\]
and
\[\mathbb{P}\left(X_t - Y_t \geq x \right) = \sum_{k=0}^{\infty} p_k\left(t,x\right).\]
\end{proposition}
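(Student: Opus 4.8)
The plan is to reduce the survival function of $X_t-Y^n_t$ to a telescoping sum of increments, to compute each increment by conditioning at the reflection time $\tau_k$, and then to evaluate the resulting expectation using the law of $\tau_k$ together with the reflection principle.

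\textbf{Step 1 (telescoping and localisation at $\tau_k$).} I would define $p_0(t,x)=\mathbb{P}(X_t-Y^0_t\ge x)$ and, for $k\ge1$, $p_k(t,x)=\mathbb{P}(X_t-Y^k_t\ge x)-\mathbb{P}(X_t-Y^{k-1}_t\ge x)$, so that $\sum_{k=0}^n p_k(t,x)=\mathbb{P}(X_t-Y^n_t\ge x)$ for every $n$ by construction; the task is then to identify these $p_k$ with the stated expressions, and then to pass to the limit. Because $Y^k$ and $Y^{k-1}$ coincide on $\{t<\tau_k\}$, the contributions of $\{\tau_k>t\}$ to the two survival functions cancel, so that for $k\ge1$
\[
p_k(t,x)=\mathbb{P}(X_t-Y^k_t\ge x,\ \tau_k\le t)-\mathbb{P}(X_t-Y^{k-1}_t\ge x,\ \tau_k\le t).
\]
For $p_0$, one only needs that $X_t-Y^0_t=(1+\rho)B^X_t-\sqrt{1-\rho^2}B^Y_t$ is centred Gaussian with variance $2(1+\rho)t$, which gives \eqref{spreadrecini} at once.

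\textbf{Step 2 (conditioning at $\tau_k$).} On $\{\tau_k\le t\}$, formula \eqref{valuespread} writes $X_t-Y^k_t$ (resp. $X_t-Y^{k-1}_t$) as $\alpha_k$ plus $(1+(-1)^k\rho)$ (resp. $(1+(-1)^{k-1}\rho)$) times $B^X_t-B^X_{\tau_k}$, minus $\sqrt{1-\rho^2}(B^Y_t-B^Y_{\tau_k})$. Conditioning on $\mathcal{F}_{\tau_k}$ and applying the strong Markov property, $B^X_t-B^X_{\tau_k}$ and $B^Y_t-B^Y_{\tau_k}$ are independent $\mathcal{N}(0,t-\tau_k)$ and independent of $\mathcal{F}_{\tau_k}$; writing $c_k=\sqrt{2(1+(-1)^k\rho)}$ one gets
\[
p_k(t,x)=\mathbb{E}\Bigl[\mathbf{1}_{\tau_k\le t}\Bigl(\Phi\bigl(\tfrac{\alpha_k-x}{c_k\sqrt{t-\tau_k}}\bigr)-\Phi\bigl(\tfrac{\alpha_k-x}{c_{k-1}\sqrt{t-\tau_k}}\bigr)\Bigr)\Bigr],
\]
so that $p_k(t,x)$ depends on the trajectory only through the law of $\tau_k$.

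\textbf{Step 3 (law of $\tau_k$ and reflection).} Since $X-Y^k$ is a continuous martingale whose bracket is absolutely continuous with derivative $2(1+(-1)^{j-1}\rho)$ on $[\tau_{j-1},\tau_j]$, hence strictly increasing, the Dambis--Dubins--Schwarz theorem gives $X_t-Y^k_t=W_{A_t}$ for a standard Brownian motion $W$, and in the Brownian clock the times $A_{\tau_j}$ are the successive hitting times of the levels $\alpha_1,\alpha_2,\dots$ by $W$. By the strong Markov property these are sums of independent L\'evy-distributed hitting-time increments; rescaling the $j$-th increment by the slope $2(1+(-1)^{j-1}\rho)$ shows that $\tau_k$ is L\'evy-distributed with parameter $\sum_{j=1}^k|\alpha_j-\alpha_{j-1}|/\sqrt{2(1+(-1)^{j-1}\rho)}$, which, once expanded according to the parity of $j$, is exactly the closed form defining $u_k$. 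Thus $\tau_k\stackrel{d}{=}\inf\{s:\mathcal{W}_s=u_k\}$ for a standard Brownian motion $\mathcal{W}$. I would then write $\Phi\bigl((\alpha_k-x)/(c\sqrt{t-s})\bigr)=\mathbb{P}(c\,\beta_{t-s}\le\alpha_k-x)$ for an independent standard Brownian motion $\beta$, concatenate $\mathcal{W}$ stopped at $\inf\{s:\mathcal{W}_s=u_k\}$ with a fresh standard Brownian motion to obtain again a standard Brownian motion $\mathcal{W}'$, and observe that each term of $p_k$ becomes $\mathbb{P}(\max_{s\le t}\mathcal{W}'_s\ge u_k,\ \mathcal{W}'_t\le u_k+(\alpha_k-x)/c)$; the reflection principle, in the form $\mathbb{P}(\max_{s\le t}\mathcal{W}'_s\ge a,\ \mathcal{W}'_t\le y)=\Phi((y-2a)/\sqrt{t})$ for $y\le a$ and the complementary identity for $y>a$, then yields the two-case closed form of \eqref{spreadrec}, the dichotomy corresponding to the sign of $x-\alpha_k$.

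\textbf{Step 4 (passage to $Y_t$, and the main difficulty).} Since $Y_t=Y^{N_t}_t$ with $N_t<\infty$ almost surely by Proposition \ref{model}(iii) and $Y_t=Y^n_t$ on $\{N_t\le n\}$, one has $|\mathbb{P}(X_t-Y_t\ge x)-\mathbb{P}(X_t-Y^n_t\ge x)|\le\mathbb{P}(N_t>n)\to0$, whence $\mathbb{P}(X_t-Y_t\ge x)=\lim_n\sum_{k=0}^n p_k(t,x)=\sum_{k=0}^\infty p_k(t,x)$; Corollary \ref{convergencesurvival} shows the partial sums are monotone but is not needed here. The main obstacle is Step 3: first, justifying that $\tau_k$ has a L\'evy law with the stated parameter --- this is where the precise expression for $u_k$ originates, via the time-change and the independence of successive Brownian hitting-time increments --- and second, the careful bookkeeping in the reflection-principle step so as to land exactly on the arguments of $\Phi$ and the case split appearing in \eqref{spreadrec}.
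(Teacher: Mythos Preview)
Your proposal is correct and follows essentially the same route as the paper: telescope the survival function, localise each increment at the reflection time, identify the law of $\tau_k$ as the hitting time of level $u_k$ by a standard Brownian motion, and evaluate the resulting expectation via the reflection principle (these are exactly the paper's Lemmas \ref{lawtaumodel}, \ref{stoppingtime} and \ref{lawspreadtau}). The only cosmetic differences are that you obtain the law of $\tau_k$ through a Dambis--Dubins--Schwarz time change while the paper does a short induction, and that your indexing produces $\alpha_k,u_k$ inside $p_k$ whereas the printed formula \eqref{spreadrec} carries $\alpha_{n+1},u_{n+1}$ in $p_n$ --- this is a harmless off-by-one in the statement, consistent with the paper's own proof writing $p_{n+1}(t,x)=\mathbb{P}(X_t-Y^{n+1}_t\ge x)-\mathbb{P}(X_t-Y^{n}_t\ge x)$.
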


\begin{corollary} \label{convergencesurvival} Let $t > 0$. For $x \in \left[\nu, \eta\right]$, the sequence $\mathbb{P}\left(X_t - Y^n_t \geq x\right)$ is increasing with $n$ when $\rho > 0$.
\end{corollary}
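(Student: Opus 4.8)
The plan is to read off everything from Proposition~\ref{spreaddistrib}. Since $\mathbb{P}\left(X_t - Y^n_t \geq x\right) = \sum_{k=0}^{n} p_k\left(t,x\right)$, the sequence $n \mapsto \mathbb{P}\left(X_t - Y^n_t \geq x\right)$ is nondecreasing exactly when $p_{n+1}(t,x) \geq 0$ for every $n \geq 0$, i.e. when $p_n(t,x) \geq 0$ for every $n \geq 1$ (the term $p_0$, given by \eqref{spreadrecini}, is irrelevant here). So the whole statement reduces to checking the sign of a single generic term of the form \eqref{spreadrec}, under the assumptions $x \in \left[\nu,\eta\right]$ and $\rho > 0$.

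First I would split according to the parity of $n$ in order to identify $\alpha_{n+1}$: when $n$ is even, $n+1$ is odd and $\alpha_{n+1} = \eta$; when $n$ is odd, $n+1$ is even (and nonzero) and $\alpha_{n+1} = \nu$. The key observation is that for $x \in \left[\nu,\eta\right]$ this automatically selects which branch of the piecewise definition \eqref{spreadrec} is in force: if $\alpha_{n+1} = \eta$ then $x \leq \alpha_{n+1}$, so the first branch applies; if $\alpha_{n+1} = \nu$ then $x \geq \alpha_{n+1}$, so the second branch applies. The two boundary values $x \in \{\nu, \eta\}$ give $p_n(t,x) = 0$ directly (both $\Phi$-terms coincide), which is harmless, or one may invoke continuity of $p_n(t,\cdot)$.

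Next I would compare the two arguments of $\Phi$ appearing in \eqref{spreadrec}. The additive shift $\pm u_{n+1}/\sqrt{t}$ is identical in both terms, so it drops out of the comparison and no property of the sequence $(u_n)$ is needed. Hence the sign of $p_n(t,x)$ is governed solely by the comparison of $\frac{x-\alpha_{n+1}}{\sqrt{2(1+\rho)t}}$ with $\frac{x-\alpha_{n+1}}{\sqrt{2(1-\rho)t}}$, after matching $(-1)^n\rho$ and $(-1)^{n+1}\rho$ to $\pm\rho$ via the parity of $n$. Since $\rho > 0$ we have $0 < 1-\rho < 1+\rho$, hence $\sqrt{2(1-\rho)t} < \sqrt{2(1+\rho)t}$; combining this with the sign of $x-\alpha_{n+1}$ (which is $\leq 0$ when $\alpha_{n+1} = \eta$ and $\geq 0$ when $\alpha_{n+1} = \nu$) shows that in each active branch the first $\Phi$-argument in \eqref{spreadrec} is at least the second. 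Monotonicity of $\Phi$ then gives $p_n(t,x) \geq 0$, hence $\mathbb{P}\left(X_t - Y^{n+1}_t \geq x\right) \geq \mathbb{P}\left(X_t - Y^{n}_t \geq x\right)$, as claimed.

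There is no real obstacle here: the argument is bookkeeping. The only care needed is to match the parity of $n$ to the correct value $\alpha_{n+1} \in \{\eta,\nu\}$, and thence to the correct branch of \eqref{spreadrec}, and to track the direction of the inequality when dividing the signed quantity $x-\alpha_{n+1}$ by the two different standard deviations $\sqrt{2(1\pm\rho)t}$. The hypothesis $\rho > 0$ enters precisely to secure the strict ordering $1-\rho < 1+\rho$.
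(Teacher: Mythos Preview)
Your proof is correct and follows essentially the same approach as the paper: both reduce the monotonicity to the sign of the single increment $p_{n+1}(t,x)$ (equivalently, $p_n(t,x)$ for $n\geq 1$), split by parity to identify $\alpha_{n+1}\in\{\eta,\nu\}$ and the relevant branch of \eqref{spreadrec}, and then compare the two $\Phi$-arguments using $1-\rho<1+\rho$ together with the sign of $x-\alpha_{n+1}$. Your handling of the boundary cases $x\in\{\nu,\eta\}$ (where the increment vanishes) is slightly more careful than the paper's, but the substance is identical.
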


\medskip
For $x \in \left[\nu, \eta\right]$, the survival function takes higher values than in the constant correlation case and than $\frac{1}{2}$. Furthermore, it is possible to increase the value of $\mathbb{P}\left(X_t-Y^n_t \geq x\right)$ by increasing the number of reflections with this model, which is why the case $n = \infty$ is considered.

\medskip
Results of Proposition \ref{convergencesurvival} are illustrated in Figure \ref{spreadmultibarriertime=1}. The case $n = 0$ corresponds to the Gaussian case. We can see that in $\left[ \nu,\eta \right]$, the survival function is increasing with $n$. In Figure \ref{spreadmultibarriertime=1}, the curves for $n = 5$, $n = 10$ and $n = 50$ are the same. At time $t = 1$, the probability to cross more than 5 barrier is very weak then the Brownian reflection reflects less than 5 times with a high probability. The convergence in $n$ at small time is fast. In Figure \ref{spreadmultibarriertime=20}, we can observe the difference between the cases $n = 5$, $n = 10$ and $n = 50$ at time $t = 20$. The survival function continues to grow. The survival function does not present the problem of the one the model of Section \ref{twocorrelation}: its value stays high when $t = 20$ which is caused by the several reflections.

\begin{figure}[h!]
    \centering
    \begin{subfigure}[b]{0.49\textwidth}
        \centering
        \includegraphics[width=\textwidth]{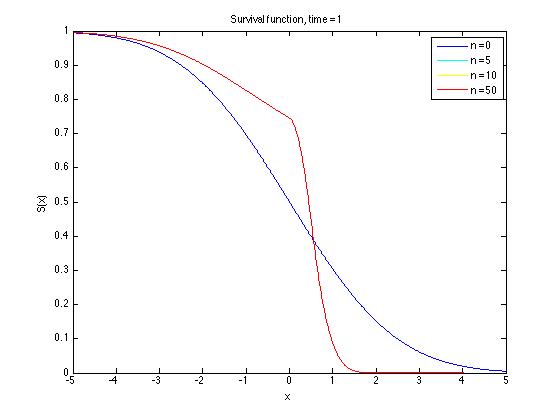}
        \caption{\label{spreadmultibarriertime=1} \it $t = 1$.}
    \end{subfigure}
        \begin{subfigure}[b]{0.49\textwidth}
        \centering
        \includegraphics[width=\textwidth]{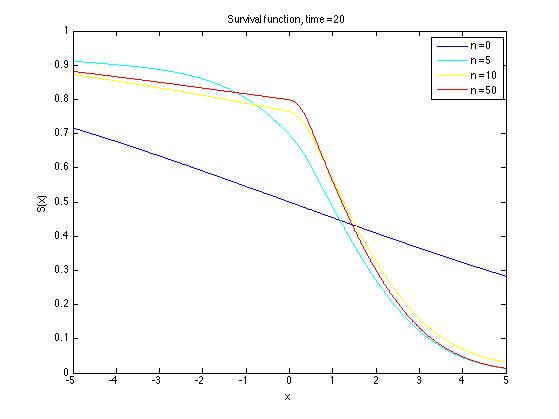}
        \caption{\label{spreadmultibarriertime=20}\it $t = 20$.}
    \end{subfigure}
\caption{\label{spreadmultibarriertime} \it Survival function of $X - Y^n$ in the multi-barrier correlation model at time $t$ with parameters $\nu$ = 0, $\eta = 0.5$ and $\rho = 0.9$ for different values of $n$.}
\end{figure}

\medskip
The results are confirmed with Figure \ref{simuspreadmultibarrier}. The higher the number of reflections is, more $X-Y^n$ is concentrated in the region $\left[ \nu,\eta \right]$. However, in the positive part of the plan, $X-Y^n$ take lower values than in the Gaussian case $n = 0$. One explanation comes from the martingality of $X-Y^n$. As $X-Y^n$ is a martingal, we have $\mathbb{E}\left(X_t - Y^n_t\right) = \mathbb{E}\left(X_0 - Y^n_0\right) = 0$.  Furthermore, $\mathbb{P}\left(X_t - Y^n_t \geq 0\right) > \frac{1}{2}$ and is higher than in the case of a constant correlation between the two Brownian motions. The probability mass in the positive part of the real line increases, but the expectation on all the real line stay the same: values taken by the random variables become lower in the positive part of the real line and becomes higher in the negative.We also remark that the symmetry present in the case $n = 0$ disappears when $n$ is higher.

\begin{figure}[h!]
    \centering
    \begin{subfigure}[b]{0.45\textwidth}
        \centering
        \includegraphics[width=\textwidth]{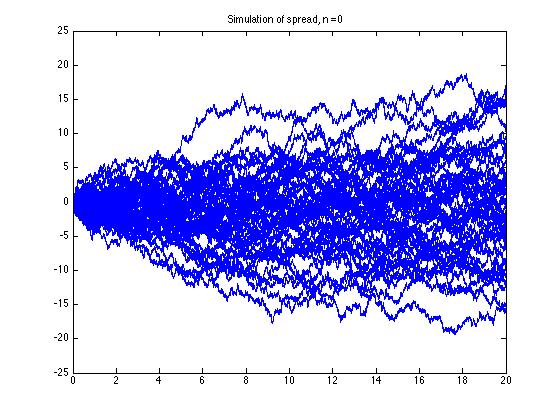}
        \caption{\it $n = 0$.}
    \end{subfigure}
    \begin{subfigure}[b]{0.45\textwidth}
        \centering
        \includegraphics[width=\textwidth]{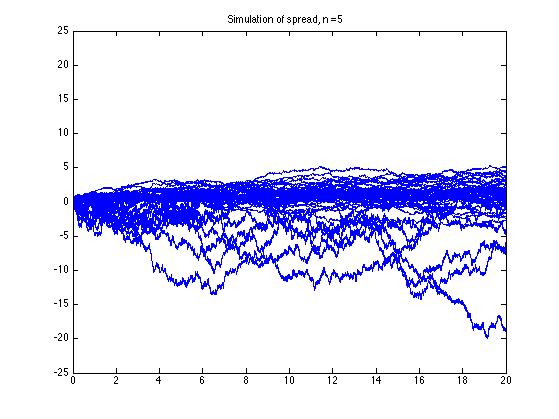}
        \caption{\it $n = 5$.}
    \end{subfigure}
        \begin{subfigure}[b]{0.45\textwidth}
        \centering
        \includegraphics[width=\textwidth]{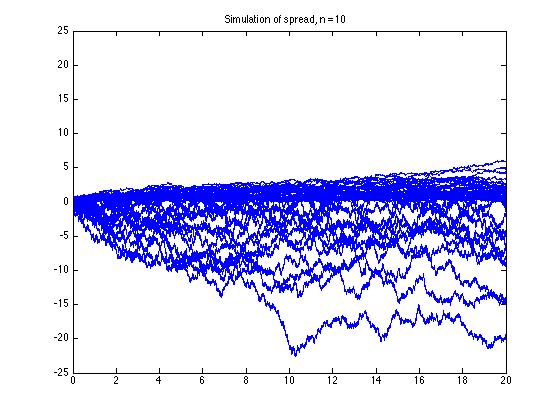}
        \caption{\it $n = 10$.}
    \end{subfigure}
        \begin{subfigure}[b]{0.45\textwidth}
        \centering
        \includegraphics[width=\textwidth]{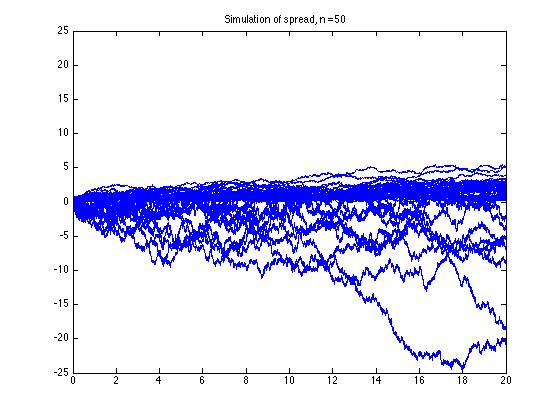}
        \caption{\it $n = 50$.}
    \end{subfigure}
    \caption{\label{simuspreadmultibarrier} \it 50 simulations of $X-Y^n$ in the multi-barrier correlation model between time 0 and 20 with parameters $\nu$ = 0, $\eta = 0.5$ and $\rho = 0.9$ and a time step of 0.001 for different values of $n$.}
\end{figure}

\subsection{A local correlation model}
\label{localcorrelationmodel}

As in Section \ref{multimodel}, we develop a model based on a two-states structure of correlation. However, we use a totally different approach where the reflection of the Brownian motion does not appear. Our model is a local correlation model and the correlation depends on the value of the difference between the two Brownian motions. The local correlation function presents two states of correlation: one of negative correlation if the difference of the two Brownian motion is under a certain barrier, one of positive correlation if the difference if over an other barrier and between the two barriers the function is chosen with sufficient regularity.

\medskip
Let $B^X$ and $B^Y$ be two independent Brownian motions defined on a filtered probability space ($\Omega$, $\mathcal{F}$, $\left(\mathcal{F}_t\right)_{t \geq 0}$, $\mathbb{P}$).
\smallskip

\noindent Let $\eta, \nu, \rho_{\min}$ and $\rho_{\max}$ be real numbers with $\eta > \nu$, $| \rho_{\min} | < 1$, $| \rho_{\max} | < 1$.
\smallskip

\noindent Let $\tilde{\rho}(x)$ be a function such that $\tilde{\rho}\left(x\right) = \rho_{\min}$ for $x \leq \nu$, $\tilde{\rho}\left(x\right) =\rho_{\max}$ for $x \geq \eta$ and $\underset{x \in \mathbb{R}}{\sup} |\tilde{\rho}\left(x\right) | < 1$. Let us assume that $\tilde{\rho}$ is Lipschitz.

\medskip
Let us consider the following system of stochastic differential equations:
\begin{equation}
\label{sdebrownianlocal}
\left \{
\begin{array}{l}
   dX_t = dB^X_t \\
   dY_t = \tilde{\rho}\left(X_t - Y_t\right) dB^{X}_t + \sqrt{1-\tilde{\rho}\left(X_t - Y_t\right)^2} dB^Y_t\\
\end{array}
\right.
\end{equation}
with $X_0 = 0$ and $Y_0 = 0$.
\medskip

Proposition \ref{solutionsde} gives results about the solution of \eqref{sdebrownianlocal}.

\begin{proposition} \label{solutionsde} The system of stochastic differential equations \eqref{sdebrownianlocal} has an unique strong solution $\left(X,Y\right)$ with $X$ and $Y$ two Brownian motions. Furthermore, $\left(X,Y\right)$ is Markovian.
\end{proposition}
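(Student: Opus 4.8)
To prove Proposition \ref{solutionsde}, the plan is to read \eqref{sdebrownianlocal} as a two–dimensional Itô equation with vanishing drift and a state–dependent diffusion matrix, to observe that this matrix is \emph{globally Lipschitz} on $\R^2$ (this is where the hypothesis $\sup_x|\tilde\rho(x)|<1$ is used), and then to invoke the classical strong existence and uniqueness theorem; the identification of $X$ and $Y$ as Brownian motions and the Markov property will then follow essentially for free. Concretely, write $U_t=(X_t,Y_t)$, $B_t=(B^X_t,B^Y_t)$, so that \eqref{sdebrownianlocal} becomes $dU_t=\sigma(U_t)\,dB_t$, $U_0=0$, with
\[
\sigma(x,y)=\begin{pmatrix}1 & 0\\[2pt] \tilde\rho(x-y) & \sqrt{1-\tilde\rho(x-y)^2}\end{pmatrix}.
\]

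Set $M=\underset{x\in\R}{\sup}\,|\tilde\rho(x)|<1$, so that $1-\tilde\rho(\cdot)^2\ge 1-M^2>0$ and the square root above is well defined and bounded away from $0$. The only nontrivial point is that $u\mapsto\sqrt{1-u^2}$ is Lipschitz on $[-M,M]$: for $a,b\in[-M,M]$,
\[
\bigl|\sqrt{1-a^2}-\sqrt{1-b^2}\bigr|=\frac{|a^2-b^2|}{\sqrt{1-a^2}+\sqrt{1-b^2}}\le\frac{M}{\sqrt{1-M^2}}\,|a-b|.
\]
Since $\tilde\rho$ is Lipschitz and $(x,y)\mapsto x-y$ is Lipschitz, each entry of $\sigma$ is a composition of Lipschitz functions, hence $\sigma$ is globally Lipschitz on $\R^2$; it is moreover bounded, hence of linear growth. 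By the classical theorem on existence and uniqueness of strong solutions of stochastic differential equations with globally Lipschitz coefficients (see e.g. \citep{jeanblanc09}), \eqref{sdebrownianlocal} admits a unique strong solution $(X,Y)$, which, being the solution of an autonomous SDE with Lipschitz coefficients, is a time–homogeneous (strong) Markov process; this gives the last assertion of the proposition.

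It then remains to recognise $X$ and $Y$ as Brownian motions. That $X=B^X$ is a Brownian motion is immediate from the first line of \eqref{sdebrownianlocal}. As for $Y$, it is a continuous local martingale null at $0$ (its dynamics has no drift), with quadratic variation
\[
\langle Y\rangle_t=\int_0^t\Bigl(\tilde\rho(X_s-Y_s)^2+1-\tilde\rho(X_s-Y_s)^2\Bigr)\,ds=t,
\]
so by L\'evy's characterisation $Y$ is a Brownian motion. I do not expect a serious obstacle: the substance is the verification that $\sigma$ is Lipschitz, and the one genuine subtlety is that $\sqrt{1-u^2}$ is only $\tfrac12$-H\"older near $u=\pm1$, so the hypothesis $\sup|\tilde\rho|<1$ cannot be dropped if one wants to stay within the classical Lipschitz framework. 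As an equivalent route one may first solve the scalar autonomous equation $dZ_t=(1-\tilde\rho(Z_t))\,dB^X_t-\sqrt{1-\tilde\rho(Z_t)^2}\,dB^Y_t$ for $Z=X-Y$, whose coefficients are Lipschitz by the same computation, and then set $Y=B^X-Z$; this makes transparent the analogy with the multi-barrier model, in which $X-Y$ plays the corresponding role.
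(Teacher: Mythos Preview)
Your proof is correct and follows essentially the same route as the paper: rewrite \eqref{sdebrownianlocal} as $dU_t=\sigma(U_t)\,dB_t$ with the same $2\times2$ diffusion matrix, use $\sup|\tilde\rho|<1$ to get that $\sqrt{1-\tilde\rho^2}$ (hence $\sigma$) is globally Lipschitz, apply the standard strong existence/uniqueness/Markov theorem, and conclude via L\'evy's characterisation that $Y$ is a Brownian motion. Your write-up is in fact more detailed than the paper's (explicit Lipschitz constant for $\sqrt{1-u^2}$ on $[-M,M]$ and the quadratic variation computation), and the alternative scalar-equation route for $Z=X-Y$ you mention is a nice remark not present in the original.
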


Contrary to the multi-barrier correlation model, the local correlation model has the advantage to give a Markovian solution, which has some importance in practice. However, less analytical results are available for this model. In the following, we gives empirical results about it. Results are close to the ones of the multi-barrier correlation model. 

\medskip
As the local correlation function is asymmetric, i.e. $\rho\left(x,y\right) \neq \rho\left(x,y\right), \; x, y \in \mathbb{R}$, the copula of the solution of \eqref{sdebrownianlocal} is expected to be asymmetric. Figure \ref{copulalocal} represents the copula of $(X_t, Y_t)$ at time $t=1$. It is similar to the one of the multi-barrier correlation model.

\begin{figure}[h!]
\centering
\includegraphics[height=0.3\textheight]{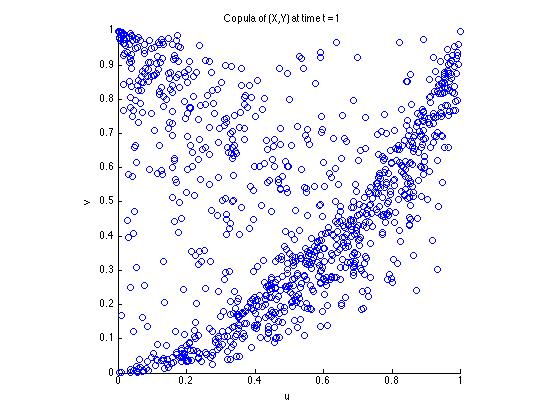}
\caption{\label{copulalocal} \it Empirical copula of $(X_t,Y_t)$ in the local correlation model at time $t = 1$ with parameters $\nu$ = 0, $\eta = 0.5$, $\rho_1 = - 0.9$ and $\rho_2 = 0.9$ and a time step of 0.001 with 1000 simulations.}
\end{figure}

\medskip
Figure \ref{spreadlocaltime=1} represents the survival function of the $X_t-Y_t$ in the local correlation model at time $t = 1$ and $t = 20$ with parameters $\nu$ = 0, $\eta = 0.5$, $\rho_{\min} = - 0.9$ and $\rho_{\max} = 0.9$. The local correlation function is chosen linear between $\nu$ and $\eta$. As for the multi-barrier correlation model, the distribution of $X_t - Y_t$ is asymmetric. The survival function seems equivalent to the one of the multi-barrier correlation model. Between $\nu$ and $\eta$, the survival function is over $\frac{1}{2}$ (Gaussian copula case). The survival function increases at the right of $\nu$ between time $t = 1$ and $t = 20$. 

\begin{figure}[h!]
    \centering
    \begin{subfigure}[b]{0.49\textwidth}
        \centering
        \includegraphics[width=\textwidth]{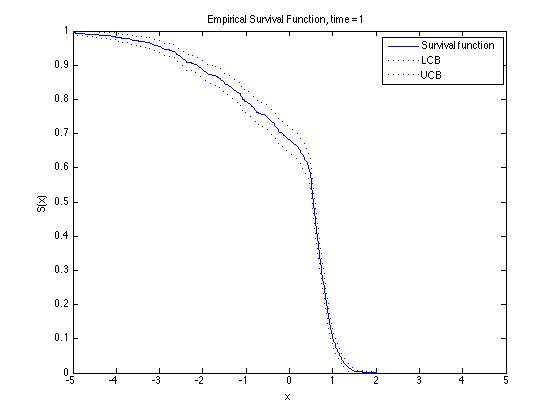}
        \caption{\it $t = 1$.}
    \end{subfigure}
        \begin{subfigure}[b]{0.49\textwidth}
        \centering
        \includegraphics[width=\textwidth]{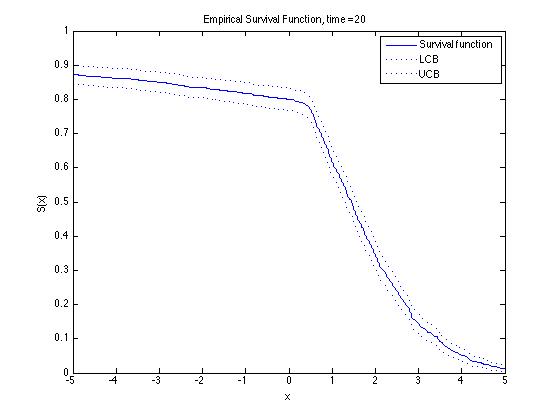}
        \caption{\it $t = 20$.}
    \end{subfigure}
\caption{\label{spreadlocaltime=1} \it Empirical survival function of $X_t - Y_t$ in the local correlation model at time $t$ with parameters $\nu$ = 0, $\eta = 0.5$, $\rho_{\min} = - 0.9$ and $\rho_{\max} = 0.9$ with interval confidence bounds at $99\%$ and estimated with 1000 simulations and a step time of 0.001.}
\end{figure}

\medskip
Figure \ref{simulocal} represents 50 simulations of $X-Y$ in the correlation local model with parameters $\nu$ = 0, $\eta = 0.5$, $\rho_{\min} = - 0.9$ and $\rho_{\max} = 0.9$.  As for the multi-barrier correlation model, the trajectories are concentrated in the positive part of the plan. 

\begin{figure}[h!]
\centering
        \includegraphics[width=0.5\textwidth]{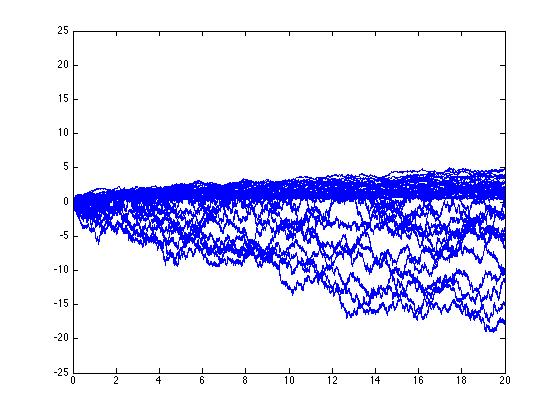}
 \caption{\label{simulocal} \it 50 simulations of $X-Y$ in the correlation local model with parameters $\nu$ = 0, $\eta = 0.5$, $\rho_{\min} = - 0.9$ and $\rho_{\max} = 0.9$ between time $t = 0$ and $t = 20$  and a time step of 0.001.}
\end{figure}

\section{An application for joint modeling of commodity prices on energy market}
\label{application}

In this section, we use the multi-barrier correlation model for the joint modeling of the forward prices of two commodities, electricity and coal. Coal is a fuel used to produce electricity which implies an asymmetry in the distribution of the difference between the two prices ; it is more likely that price of coal is lower than price of electricity (in the same unit). Modeling the dependence with a Gaussian copula is then not adapted. An advantage of our model is that it contains asymmetry in the distribution of the difference between the two prices. Furthermore, it allows not to change the marginal models.

\subsection{Model}
\medskip

Let us consider a two-factor model for both electricity and coal. For more information on the two-factor model, we refer to the study of Benth and Koekebakker \citep{benth08}.

\medskip
Let $f^{E}\left(t,T\right)$ (resp. $f^{C}\left(t,T\right)$) the forward price of the electricity (resp. coal) at time $t$ with maturity $T$, that is of the delivery of electricity (resp. coal) at maturity $T$ during one day. Stochastic differential equation \eqref{sdecommo} gives dynamic of these products.

\begin{equation}
\label{sdecommo}
\left \{
\begin{array}{c @{=} c}
 df^E\left(t,T\right) & f^E\left(t,T\right)\left(\sigma^E_s e^{-\alpha^E_s(T-t)} dB^{E,s}_{t} + \sigma^E_l dB^{E,l}_{t}\right)\\
 df^C\left(t,T\right) & f^C\left(t,T\right)\left(\sigma^C_s e^{-\alpha^C_s(T-t)} dB^{C,s}_{t} + \sigma^C_l dB^{C,l}_{t}\right)   
\end{array}
\right.
\end{equation}
where $B^{E,s}$, $B^{E,l}$, $B^{C,s}$, $B^{C,l}$ are standard Brownian motions defined on a common probability space $\left(\Omega, \mathcal{F}, \mathbb{P}\right)$.

\medskip 
In the dynamic of each commodity, there is one factor corresponding to the short term factor with a volatility $\sigma^i_s e^{-\alpha^i_s(T-t)}, i = E, C$ . This short term factor is used to model the Samuelson effect \citep{samuelson65}, that is the decrease of volatility with time to maturity. The other factor is the long term factor with a constant volatility $\sigma^i_{l}, i = E, C$. 

\medskip
Products traded on the market have a delivery period, except for the spot. We denote by $f^i\left(t,T,\theta\right), i =E,C$ the price of the product at time $t$ that delivers $i$ at time $T$ during a period $\theta$. By absence of arbitrage opportunities, we have 
\[f^i\left(t,T,\theta\right) = \frac{1}{\theta}\int_{T}^{T + \theta} f^i\left(t,u\right)du.\]
In the following, we will only consider $n$ Month Ahead ($n$MAH), $n \geq 1$, which are products with a delivery period of one month and a delivery date which is the $1^{st}$ of the $n^{th}$ following month from today.

\medskip
Equation \eqref{solsdecommo} gives the solutions of \eqref{sdecommo}.
\begin{equation}
\label{solsdecommo}
\left \{
\begin{array}{c @{=} c}
f^E\left(t,T\right) & f^E\left(0,T\right)e^{\int_0^t \sigma^E_s e^{-\alpha^E_s\left(T-u\right)} dB^{E,s}_{u} - \frac{1}{2}\int_0^t \left(\sigma^E_s\right)^2 e^{-2\alpha^E_s\left(T-u\right)} du + \sigma^E_l B^{E,l}_{t} - \frac{1}{2}\left(\sigma^E_l\right)^2t}\\
f^C\left(t,T\right) & f^C\left(0,T\right)e^{\int_0^t \sigma^C_s e^{-\alpha^C_s\left(T-u\right)} dB^{C,s}_{u} - \frac{1}{2}\int_0^t \left(\sigma^C_s\right)^2 e^{-2\alpha^C_s\left(T-u\right)} du + \sigma^C_l B^{C,l}_{t} - \frac{1}{2}\left(\sigma^C_l\right)^2 t}\\\end{array}
\right.
\end{equation}

The spot price of electricity is given by $S^E_t = f^E(t,t)$ and the one of coal by $S^C_t = f^C(t,t)$. Then we have 
\begin{equation}
\left \{
\begin{array}{c @{=} c}
S^E_t & f^E\left(0,t\right)e^{\int_0^t \sigma^E_s e^{-\alpha^E_s\left(t-u\right)} dB^{E,s}_{u} - \frac{1}{2}\int_0^t \left(\sigma^E_s\right)^2 e^{-2\alpha^E_s\left(t-u\right)} du + \sigma^E_l B^{E,l}_{t} - \frac{1}{2}\left(\sigma^E_l\right)^2t}\\
S^C_t & f^C\left(0,t\right)e^{\int_0^t \sigma^C_s e^{-\alpha^C_s\left(t-s\right)} dB^{C,s}_{u} - \frac{1}{2}\int_0^t \left(\sigma^C_s\right)^2 e^{-2\alpha^C_s\left(t-u\right)} du + \sigma^C_l B^{C,l}_{t} - \frac{1}{2}\left(\sigma^C_l\right)^2 t}\\\end{array}
\right.
\end{equation}

We model the dependence as follow:
\begin{itemize}
\item $B^{E,s}$ and $B^{E,l}$ are independent,
\item $B^{C,s}$ and $B^{C,l}$ are independent,
\item $B^{E,s}$ and $B^{C,s}$ are independent,
\item $B^{E,l}$ and $B^{C,l}$ are constructed following the multi-barrier correlation model defined in Section \ref{multibarriercorrelationmodel}. 
\end{itemize}

Usually, a constant correlation matrix is used to model the dependence between the 4 Brownian motions. 

\subsection{Parameters}
\label{parameters}
\medskip

We consider the parameters of the marginal laws given in Table \ref{paramgazelec}. Units are taken according to the year. We use the forward prices of electricity and of coal during 2014 in France to estimate these parameters. The method used for estimation is the first one of \citep{feron15}. 

\begin{table}[h!]
\centering
\begin{tabular}{|c|c|c|}
\hline
Parameters & Electricity & Coal \\
\hline
$\sigma_l$ &10.2555$\%$ & 9.2602$\%$  \\
\hline
$\sigma_s$ & $97.2925\%$ & 11.2134$\%$ \\
\hline
$\alpha_s$ & 17.0363 & 2.07832 \\
\hline
\end{tabular}
\caption{\label{paramgazelec} \it Parameters of the two-factor model for electricity and coal.}
\end{table}

Parameters for the multi-barrier correlation model used to model the dependence between $B^{E,l}$ and $B^{C,l}$ are chosen arbitrarily ; we choose $\nu = 0$,  $\eta = 0.5$, $\rho = 0.9$, $n = \infty$.

\medskip
In the benchmark model where dependence between $B^{E,l}$ and $B^{C,l}$ is modeled by a constant correlation, the correlation is equal to 0.275. The other correlation are equals to 0.

\medskip
We assume that $f^E\left(0,T\right) - H f^C\left(0,T\right) = 0$ and $f^E\left(0,T\right) = 100$ for all $T$ (which does not represent the reality because we do not take into account the seasonality of the prices of electricity and coal). $H$ is a conversion factor between the unit of electricity prices and the unit of coal prices and is called the heat rate. 

\subsection{Numerical results}
We are interested in the difference between $f^E\left(t,T\right)$ and $H f^C\left(t,T\right)$. We only are interested in the multi-barrier correlation model ; results are the same for the local correlation model. 

\begin{figure}[h!]
    \centering
    \begin{subfigure}[b]{0.45\textwidth}
        \centering
        \includegraphics[width=\textwidth]{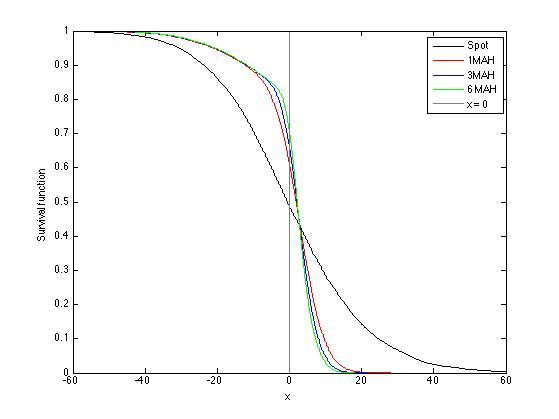}
        \caption{\it Multi-barrier correlation model.}
    \end{subfigure}
    \begin{subfigure}[b]{0.45\textwidth}
        \centering
        \includegraphics[width=\textwidth]{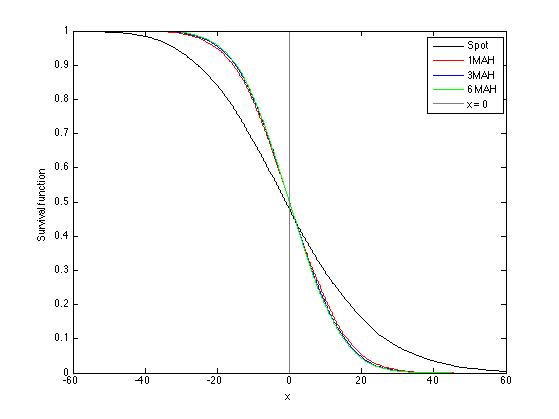}
        \caption{\it Benchmark model.}
    \end{subfigure}
    \caption{\label{differencedistribution} \it Empirical survival function of the difference between the price of electricity and the price of coal at time $t = 365$ days estimated with 10000 simulations with a time step of $\frac{1}{24}$ days for different products (Spot, 1MAH, 3MAH, 6MAH) in the multi-barrier correlation model and in the benchmark model.}
\end{figure}

\medskip
Figure \ref{differencedistribution} represents the survival function of the difference between spot, 1MAH, 3MAH, and 6MAH prices. In the multi-barrier correlation model, the probability for the difference between the two spot prices to be non negative is close to $50\%$, which is the same value than in the benchmark model. However, we have good results if we consider long term products as 1MAH, 3MAH and 6MAH: we have probabilities closed to $60\%$ for the 1MAH, and $70\%$ for the 3MAH and 6MAH in the multi-barrier correlation model whereas we have probabilities closed to $50\%$ in the benchmark model. The probability increases with the time to maturity. In the case of spot prices, the volatilities of the prices of the commodities is dominated by the short term factor, which we do not control ; in the other cases, these volatilities are small and the long term factor which we control dominates. This explains that we do not increase a lot the probability for the difference between the spot prices to be non negative. We also observed that in the multi-barrier correlation model, the survival function decreases faster than in the benchmark model and probability of being superior to 20 is closed to 0, which is not the case in the benchmark model. 

\begin{figure}[h!]
    \centering
    \begin{subfigure}[b]{0.45\textwidth}
        \centering
        \includegraphics[width=\textwidth]{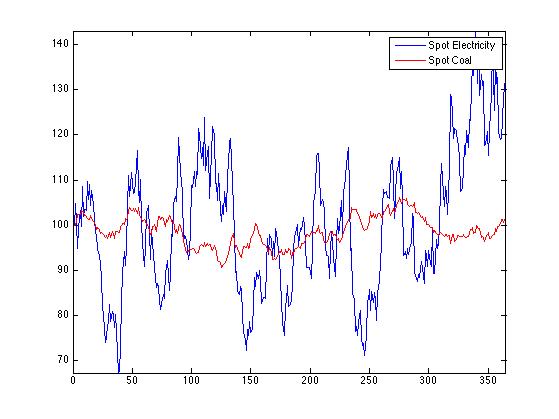}
        \caption{\it Spot prices of electricity and coal.}
    \end{subfigure}
    \begin{subfigure}[b]{0.45\textwidth}
        \centering
        \includegraphics[width=\textwidth]{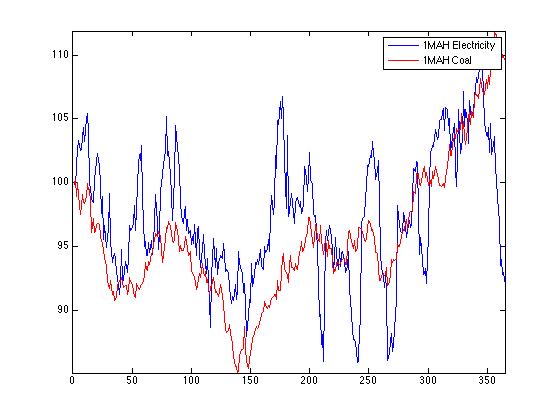}
        \caption{\it 1MAH prices of electricity and coal.}
    \end{subfigure}
        \begin{subfigure}[b]{0.45\textwidth}
        \centering
        \includegraphics[width=\textwidth]{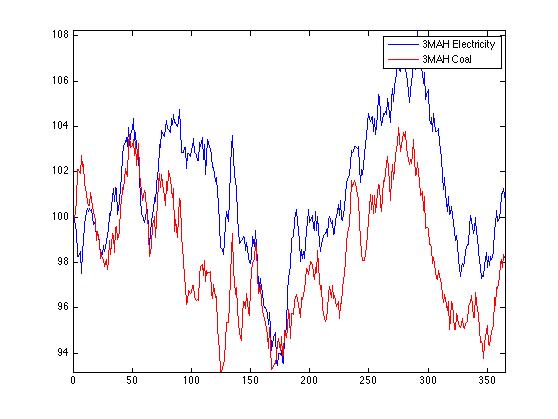}
        \caption{\it 3MAH prices of electricity and coal.}
    \end{subfigure}
        \begin{subfigure}[b]{0.45\textwidth}
        \centering
        \includegraphics[width=\textwidth]{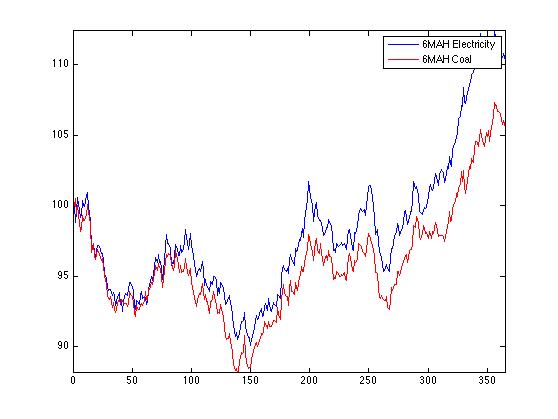}
        \caption{\it 6MAH prices of electricity and coal.}
    \end{subfigure}
    \caption{\label{trajectoryexample} \it One year trajectory of electricity and coal products in the multi-barrier correlation model with a time step of $\frac{1}{24}$ days.}
\end{figure}

\medskip
Figure \ref{trajectoryexample} represents one trajectory of the different products. In the case of the spot prices, since electricity has a high volatility, it is difficult to control the difference between the two processes. For the other products, as the short term volatility decreases, we see that there is a control between the two processes.

\begin{figure}[h!]
    \centering
    \begin{subfigure}[b]{0.45\textwidth}
        \centering
        \includegraphics[width=\textwidth]{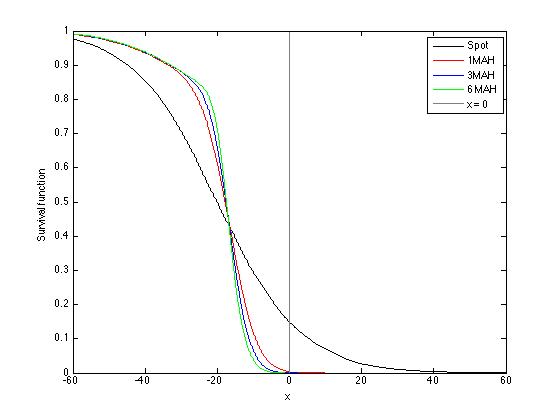}
        \caption{\it Multi-barrier correlation model.}
    \end{subfigure}
    \begin{subfigure}[b]{0.45\textwidth}
        \centering
        \includegraphics[width=\textwidth]{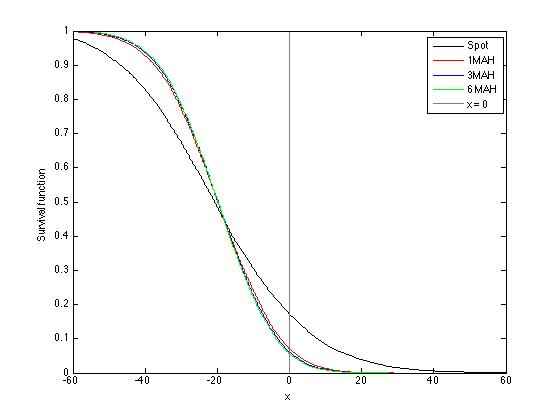}
        \caption{\it Benchmark model.}
    \end{subfigure}
    \caption{\label{differencedistribution20} \it Empirical survival function of the difference between the price of electricity and the price of coal at time $t = 335$ days estimated with 10000 simulations with a time step of $\frac{1}{24}$ days for different products (Spot, 1MAH, 3MAH, 6MAH) in the multi-barrier correlation model and in the benchmark model if the difference is equal to -20 at time $t = 0$.}
\end{figure}

\begin{remark}
Using a multi-barrier correlation model to model the dependence between $B^{E,s}$ and $B^{C,s}$ does not improve the results for the different survival functions. That is why we consider them independent. 
\end{remark}

\medskip
Results are sensitive to initial conditions. If we choose $f^E\left(0,T\right) = 100$ and  $H f^C\left(0,T\right) = 120$ for instance, $f^E\left(0,T\right)-H f^C\left(0,T\right) = -20$ and we will have a distribution that is concentrated around -20, because the difference between the price is a martingale. The probability to be greater than -20 is higher in the multi-barrier correlation model than in the benchmark model but the probability to be positive is lower than in the benchmark model: it is closed to 0 in the multi-barrier correlation model whereas it is closed to $10\%$ in the benchmark model. Figure $\ref{differencedistribution20}$ represents the survival function of the difference between prices of electricity and coal for different products with $\nu = 0$ and $\eta = 0.5$. As we choose a barrier near 0, the survival function will be maximized around -20.

\medskip
One way to improve the value of the survival function around 0 is to choose a higher $\eta$. The idea in our model is that we want $B^{E,l}$ to go over $B^{C,l} + \eta$, using correlation of -1 when the two prices are equals at time $t = 0$. We want for the price of the electricity to go over the price of coal, that happens when $f^E\left(t,T\right) = H f^C\left(t,T\right)$, i.e. when $\sigma^E_l B^{E,l}_t - \sigma^C_l B^{C,l}_t= \log\left(\frac{Hf^C\left(0,T\right)}{f^E\left(0,T\right)}\right)$ if we neglect the short term factors. We have $\sigma^E_l \approx \sigma^C_l \approx \sigma = 0.1$ year$^{-1}$. Then, we want $B^{E,l}_t - B^{C,l}_t \approx \frac{1}{\sigma}\log\left(\frac{Hf^C\left(0,T\right)}{f^E\left(0,T\right)}\right)$. In the case with the same initial conditions, the right hand side term is equal to $0$ and we choose a barrier of $\eta$. Heuristically, we then choose a barrier of $\eta^{'} = \eta + \frac{1}{\sigma}\log\left(\frac{Hf^C\left(0,T\right)}{f^E\left(0,T\right)}\right) \approx 170.5$ and $\nu = 170$. Figure \ref{differencedistribution20eta2} gives the survival function of the different products in the multi-barrier correlation model with barriers $\nu = 170$ and $\eta =170.5$. 

\begin{figure}[h!]
\centering
\includegraphics[height=0.3\textheight]{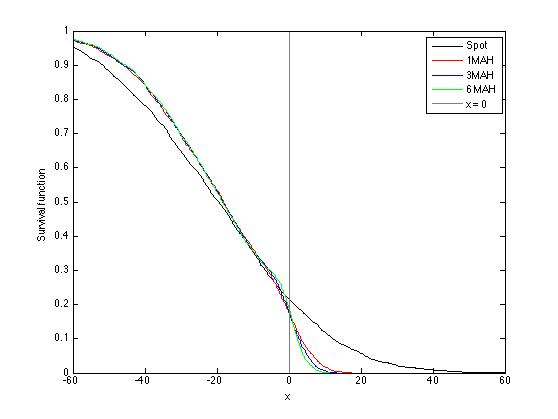}
  \caption{\label{differencedistribution20eta2} \it Empirical survival function of the difference between the price of electricity and the price of coal at time $t = 335$ days estimated with 10000 simulations with a time step of $\frac{1}{24}$ days for different products (Spot, 1MAH, 3MAH, 6MAH) in the multi-barrier correlation model if the difference is equal to -20 at time $t = 0$ and with barriers $\nu = 170$ and $\eta =170.5$.}
\end{figure}

\medskip
We can see that around 0, the values of the survival function are much better than in the benchmark model: around $20\%$ in the multi-barrier correlation model and around $10\%$ in the benchmark model. However, the values are still low. Indeed, even in the maximal case where the second Brownian motion is the reflection of the first one and the volatilities are equals, the probability for the difference between the Brownian motions to be positive knowing that one starts at $-x, \; x > 0$ and the other at 0 is equal to $2\Phi\left(\frac{-x}{2\sqrt{t}}\right)$ which decreases with $x$.

\subsection{Pricing of European spread options} In this Section, we compare prices of European spread options in the factorial model \eqref{sdecommo} with different structures of dependence: the multi-barrier correlation model (m-b) with correlation equals to 0.3, 0.6, 0.9 and the benchmark model (constant correlation) with correlation equals to 0 and 0.275. Benchmark model with correlation equal to 0 is the same model than multi-barrier correlation model with correlation equal to 0. We price options with payoff $\left(X_t-HY_t\right)^+$, where $X_t$ is an electricity product, $Y_t$ a coal product and H is the conversion factor between electricity and coal. $X_t$ and $Y_t$ are Spot, 1MAH, 3MAH and 6MAH. Parameters used are those of Table \ref{paramgazelec}. The price of the option is equal to $\mathbb{E}\left(\left(X_t-Y_t\right)^+\right)$. We use Monte Carlo to estimate this expectation with a number of simulations equal to 10000. To simulate the processes, we use a step time of 1 hour.

\medskip
Table \ref{0option} gives $95\%$ confidence intervals for the price of spread options with maturity 1 year when $X_0 = HY_0 = 100$. For the multi-barrier correlation model, we choose $\nu = 0$ and $\eta = 0.5$. In the multi-barrier correlation model, the value of the option decreases with the correlation parameters. Indeed, when the correlation parameters increases, the probability to be over 0 is higher, but the values taken by the difference $X_t - HY_t$ are smaller and smaller. The increase in the probability do not compensate the decrease in the values that can be taken and the expectation, i.e. the value of the option decreases. Value of the option in the benchmark model with correlation equal to $0.275$ is close to the one in the multi-barrier correlation model with correlation equal to $0.6$. We also observe that the value of the option decreases with the product maturity, in all the models. 
\begin{footnotesize}
 \begin{table}[h!]
\begin{tabular}{|c|c|c|c|c|c|c|}
\hline
Products / Parameters &	$\rho = 0$	&$\rho = 0.3$, m-b& $\rho = 0.6$, m-b& $\rho = 0.9$, m-b& $\rho = 0.275$, benchmark\\
\hline
Spot &$\left[8.39,8.92\right] $&	$\left[8.44,8.96\right] $&	$\left[7.87,8.37\right] $	&$\left[7.29,7.75\right] $&	$\left[7.69,8.19\right] $\\
\hline
1MAH  & $\left[6.54,6.94\right] $&	$\left[6.56,6.94\right] $	&$\left[5.96,6.30\right] $	&$\left[5.00,5.29\right] $&	$\left[5.80,6.16\right] $	\\
\hline
3MAH    & $\left[5.45,5.78\right] $&	$\left[5.41,5.70\right] $	&$\left[4.79,5.03\right] $	&$\left[3.27,3.41\right] $&	$\left[4.72,5.00\right] $	\\
 \hline
 6MAH    & $\left[5.33,5.69\right] $&	$\left[5.26,5.55\right] $	&$\left[4.65,4.87\right] $	&$\left[3.02,3.15\right] $&	$\left[4.60,4.88\right] $	\\
 \hline
\end{tabular}
 \caption{\label{0option}\it Values of European Spread options $\left(X_t-HY_t\right)^+$ between electricity and coal products in the benchmark model and in the multi-barrier correlation model with parameters $\nu = 0$, $\eta = 0.5$ with $X_0 = HY_0 =100$.} 
 \end{table}
\end{footnotesize}
Table \ref{-20option} gives $95\%$ confidence intervals for the price of spread options with maturity 1 year when $X_0 = 100$ and  $HY_0 = 120$. For the multi-barrier correlation model, we choose $\nu = 170$ and $\eta = 170.5$. Contrarily to results of Table \ref{0option}, the value of the option increases with the correlation parameter in the multi-barrier correlation model. Furthermore, the value of the option in the multi-barrier case is greater than the one of the benchmark model, for all the given correlations. In the constant correlation case, the probability to be greater than 0 is very low. The increase of probability in the multi-barrier correlation model is enough for the option value to be higher. 

\begin{footnotesize}
 \begin{table}[h!]
\begin{tabular}{|c|c|c|c|c|c|c|}
\hline
Products / Parameters &	$\rho = 0$	&$\rho = 0.3$, m-b& $\rho = 0.6$, m-b& $\rho = 0.9$, m-b& $\rho = 0.275$, benchmark\\
\hline
Spot &$\left[2.52,2.83\right] $&	$\left[2.92,3.25\right] $&	$\left[3.03,3.36\right] $	&$\left[3.13,3.48 \right] $&	$\left[2.09 , 2.37\right] $\\
\hline
1MAH  & $\left[1.24,1.42\right] $&	$\left[1.57,1.77\right] $	&$\left[1.72,1.92\right] $	&$\left[1.74,1.98 \right] $&	$\left[0.88 , 1.02\right] $	\\
\hline
3MAH    & $\left[0.67,0.79\right] $&	$\left[0.9,1.02\right] $	&$\left[1.03,1.15\right] $	&$\left[0.81,0.90\right] $&	$\left[0.37,0.45\right] $	\\
 \hline
 6MAH    & $\left[0.63,0.74\right] $&	$\left[0.82,0.94\right] $	&$\left[0.92,1.03\right] $	&$\left[0.67,0.74\right] $&	$\left[0.33,0.41\right] $	\\
 \hline
\end{tabular}
 \caption{\label{-20option}\it Values of European Spread options $\left(X_t-HY_t\right)^+$ between electricity and coal products in the benchmark model and in the multi-barrier correlation model with parameters $\nu = 170$, $\eta = 170.5$ with $X_0 = 100$ and $HY_0 =120$.}  
 \end{table} 
\end{footnotesize}

\section{Proofs}
\label{proofs}

\subsection{Preliminary results}

We start with well known results that will be useful for the proofs of propositions.

\medskip
\begin{lemma} \label{lawsup} \label{lawinf} Let $B = \left(B_t\right)_{t \geq 0}$ be a standard Brownian motion on a filtered probability space $\left(\Omega, \mathcal{F}, \left(\mathcal{F}_t\right)_{t\geq0}, \mathbb{P}\right)$. We have: 
\begin{enumerate}
\item[(i)]  for $y \geq 0$, 
\[\mathbb{P}\Bigl(B_t \leq x,  \underset{s \leq t}{\sup \;} B_s \leq y\Bigr) = \left\lbrace
\begin{array}{lll}
\Phi\Bigl(\frac{x}{\sqrt{t}}\Bigr) - \Phi\Bigl(\frac{x-2y}{\sqrt{t}}\Bigr) & \mbox{if} & x < y  \\
2\Phi\Bigl(\frac{y}{\sqrt{t}}\Bigr) - 1  & \mbox{if} & x \geq y 
\end{array}\right.,  \]
\item[(ii)]  for $y \leq 0$,
\[\mathbb{P}\Bigl(B_t \leq x,  \underset{s \leq t}{\inf \;} B_s \leq y\Bigr) = \left\lbrace
\begin{array}{ccc}
\Phi\Bigl(\frac{x}{\sqrt{t}}\Bigr) & \mbox{if} & x \leq y  \\
2\Phi\Bigl(\frac{y}{\sqrt{t}}\Bigr) - \Phi\Bigl(\frac{-x+2y}{\sqrt{t}}\Bigr)  & \mbox{if} & x > y 
\end{array}\right..  \]
\end{enumerate}
\end{lemma}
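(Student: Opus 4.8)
The plan is to obtain both identities from the reflection principle, i.e.\ from the strong Markov property of $B$ applied at the first passage time $\tau_y = \inf\{s \geq 0 : B_s = y\}$; this is legitimate since $\left(\mathcal{F}_t\right)_{t\geq0}$ satisfies the usual conditions. The statement being ``well known'', the point is just to organize the two case splits cleanly and deduce (ii) from (i) by reflecting $B$.

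For (i), fix $y \geq 0$. When $x < y$, write $\mathbb{P}\bigl(B_t \leq x,\ \sup_{s\leq t} B_s \leq y\bigr) = \mathbb{P}\left(B_t \leq x\right) - \mathbb{P}\bigl(B_t \leq x,\ \sup_{s\leq t} B_s > y\bigr)$. On $\{\sup_{s\leq t} B_s > y\} = \{\tau_y < t\}$, the process $\bigl(B_{\tau_y+u}-y\bigr)_{u\geq0}$ is, conditionally on $\mathcal{F}_{\tau_y}$, a Brownian motion independent of $\mathcal{F}_{\tau_y}$; reflecting it and using $x<y$ gives $\mathbb{P}\bigl(B_t\leq x,\ \sup_{s\leq t}B_s > y\bigr) = \mathbb{P}\left(B_t \geq 2y-x\right) = \Phi\bigl(\tfrac{x-2y}{\sqrt t}\bigr)$, which together with $\mathbb{P}\left(B_t\leq x\right) = \Phi\bigl(\tfrac{x}{\sqrt t}\bigr)$ yields the first line. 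When $x \geq y$, the event $\{\sup_{s\leq t} B_s \leq y\}$ is contained in $\{B_t \leq y\} \subseteq \{B_t \leq x\}$, so the left-hand side equals $\mathbb{P}\bigl(\sup_{s\leq t}B_s \leq y\bigr) = \mathbb{P}\left(\tau_y > t\right) = 1 - 2\,\mathbb{P}\left(B_t > y\right) = 2\Phi\bigl(\tfrac{y}{\sqrt t}\bigr) - 1$, again using the reflection principle for the law of $\tau_y$.

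For (ii), fix $y \leq 0$ and apply (i) to the Brownian motion $-B$: since $\{\inf_{s\leq t}B_s \leq y\} = \{\sup_{s\leq t}(-B_s) \geq -y\}$ and $\{B_t \leq x\} = \{-B_t \geq -x\}$ with $-y \geq 0$, one expands $\mathbb{P}(A\cap C) = \mathbb{P}(C) - \mathbb{P}(A^c \cap C)$ and runs the same reflection computation, now with barrier $-y$ and endpoint level $-x$. The case $-x \geq -y$ (that is, $x \leq y$) gives $\mathbb{P}(-B_t\geq -x) = \Phi\bigl(\tfrac{x}{\sqrt t}\bigr)$, and the case $-x < -y$ (that is, $x > y$) gives $2\Phi\bigl(\tfrac{y}{\sqrt t}\bigr) - \mathbb{P}(-B_t \geq x-2y) = 2\Phi\bigl(\tfrac{y}{\sqrt t}\bigr) - \Phi\bigl(\tfrac{-x+2y}{\sqrt t}\bigr)$, which are exactly the two lines of the statement. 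The whole argument is classical; the only points requiring a little care are checking that the case split in the reflection step ($x<y$ versus $x\geq y$) matches the case split in the statement, and observing that, since $\sup_{s\leq t}B_s$ has no atoms, the events $\{\sup_{s\leq t}B_s > y\}$ and $\{\tau_y < t\}$ agree up to a null set. I expect no genuine obstacle here.
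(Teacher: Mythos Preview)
Your argument is correct and is precisely the classical reflection-principle derivation; the paper itself does not give a proof but simply refers to \cite{jeanblanc09}, where the same computation appears. There is nothing to add: your case splits match the statement, and deducing (ii) from (i) by applying the result to $-B$ is the standard route.
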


\begin{preuve} The reader is referred to \cite[Theorem\ 3.1.1.2, p.\ 137]{jeanblanc09} for the proof of (i) and to \cite[Section\ 3.1.5, p.\ 142]{jeanblanc09} for the proof of (ii).
\end{preuve}

\bigskip
\begin{lemma} \label{stoppingtime} Let $B^1 = \left(B^1_t\right)_{t \geq 0}$ and $B^2 = \left(B^2_t\right)_{t \geq 0}$ be two independent standard Brownian motion defined on a common filtered probability space $\left(\Omega, \mathcal{F}, \left(\mathcal{F}_t\right)_{t\geq0}, \mathbb{P}\right)$with $\left(\mathcal{F}_t\right)_{t\geq0}$ having all the good properties. Let $h \geq 0$ and $\tau^h =  \inf \{t  \geq 0 : B^2_t = h \}$. We have:
\[\mathbb{P}\Bigl(B^1_t - B^1_{\tau^h} \leq x, \tau^h \leq t\Bigr) = \Phi\Bigl(\frac{x-h}{\sqrt{t}}\Bigr){\bf1}_{x < 0} + \Bigl(\Phi\Bigl(\frac{x+h}{\sqrt{t}}\Bigr) - 2\Phi\Bigl(\frac{h}{\sqrt{t}}\Bigr) + 1\Bigr){\bf1}_{x \geq 0}.\]
\end{lemma}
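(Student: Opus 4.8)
The plan is to collapse the two independent Brownian motions into a single one so that the quantity becomes a joint hitting‑time / running‑maximum probability to which Lemma~\ref{lawsup}(i) applies. The case $h=0$ is trivial ($\tau^h=0$ a.s. and both sides reduce to $\Phi(x/\sqrt t)$), so one may assume $h>0$.

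First I would introduce the concatenated process
\[
G_u \;=\; B^2_{u\wedge\tau^h} \;+\; \bigl(B^1_u - B^1_{u\wedge\tau^h}\bigr), \qquad u\ge 0,
\]
so that $G_u=B^2_u$ for $u\le\tau^h$ and $G_u=h+\bigl(B^1_u-B^1_{\tau^h}\bigr)$ for $u\ge\tau^h$ on $\{\tau^h<\infty\}$ (and $\tau^h<\infty$ a.s.). Since $\tau^h$ is a stopping time of the filtration generated by $B^2$, hence of $\left(\mathcal{F}_t\right)_{t\geq0}$, and $B^1$ is independent of $B^2$, the strong Markov property applied at $\tau^h$ shows that $\bigl(B^1_{\tau^h+v}-B^1_{\tau^h}\bigr)_{v\ge 0}$ is a standard Brownian motion independent of $\mathcal{F}_{\tau^h}$. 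Gluing the stopped Brownian path $\bigl(B^2_{u\wedge\tau^h}\bigr)_{u\ge0}$ to this fresh independent Brownian motion started from its endpoint $B^2_{\tau^h}=h$ produces again a standard Brownian motion, so $G$ is a standard Brownian motion.

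Next I would note that $G$ coincides with $B^2$ on $[0,\tau^h]$ and $B^2_0=0<h$, so the first hitting time $\sigma^h:=\inf\{u\ge0:G_u=h\}$ equals $\tau^h$, and that on $\{\tau^h\le t\}$ one has $B^1_t-B^1_{\tau^h}=G_t-G_{\tau^h}=G_t-h$. Hence, using $\{\sigma^h>t\}=\{\sup_{s\le t}G_s\le h\}$ up to a null set,
\[
\mathbb{P}\bigl(B^1_t-B^1_{\tau^h}\le x,\ \tau^h\le t\bigr)
=\mathbb{P}\bigl(G_t\le x+h,\ \sigma^h\le t\bigr)
=\mathbb{P}(G_t\le x+h)-\mathbb{P}\Bigl(G_t\le x+h,\ \sup_{s\le t}G_s\le h\Bigr).
\]
Here $\mathbb{P}(G_t\le x+h)=\Phi\bigl((x+h)/\sqrt t\bigr)$, and the last term is given by Lemma~\ref{lawsup}(i) applied with $G$ and level $h$, the relevant alternative being $x+h<h\iff x<0$: it equals $\Phi\bigl((x+h)/\sqrt t\bigr)-\Phi\bigl((x-h)/\sqrt t\bigr)$ for $x<0$ and $2\Phi(h/\sqrt t)-1$ for $x\ge0$. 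Substituting yields $\Phi\bigl((x-h)/\sqrt t\bigr)$ for $x<0$ and $\Phi\bigl((x+h)/\sqrt t\bigr)-2\Phi(h/\sqrt t)+1$ for $x\ge0$, which is the claimed formula.

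The main obstacle is the second step: the careful verification that $G$ is a standard Brownian motion. One must keep track of filtrations ($\tau^h$ is measurable with respect to $B^2$ alone but is used as a stopping time of the joint filtration) and invoke the strong Markov property together with $B^1\perp B^2$ to ensure that the post‑$\tau^h$ increments of $B^1$ form an independent Brownian motion — this is exactly what makes the concatenation a genuine Brownian motion and not merely a process with the correct one‑dimensional marginals. A more computational alternative, which avoids this point, is to condition on the path of $B^2$ so that $\tau^h$ becomes deterministic and $B^1_t-B^1_{\tau^h}\mid\{\tau^h=s\}\sim\mathcal{N}(0,t-s)$, and then integrate $\Phi\bigl(x/\sqrt{t-s}\bigr)$ against the density $\tfrac{h}{\sqrt{2\pi s^3}}\,e^{-h^2/(2s)}$ of $\tau^h$ over $s\in[0,t]$; the concatenation argument is a way of repackaging that integral identity through Lemma~\ref{lawsup}.
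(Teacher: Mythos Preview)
Your proof is correct. The concatenated process $G$ is indeed a standard Brownian motion (the verification you flag as the ``main obstacle'' goes through exactly as you describe), and the reduction to Lemma~\ref{lawsup}(i) with level $h$ and point $x+h$ is clean.

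The paper reaches the same endpoint by a shorter route that avoids constructing $G$. It first conditions on $\tau^h$ to write
\[
\mathbb{P}\bigl(B^1_t-B^1_{\tau^h}\le x,\ \tau^h\le t\bigr)=\mathbb{E}\Bigl(\Phi\bigl(x/\sqrt{t-\tau^h}\bigr)\,{\bf 1}_{\tau^h\le t}\Bigr),
\]
and then observes that the \emph{same} expression arises if one replaces $B^1$ by $B^2$, since $(B^2_{\tau^h+v}-B^2_{\tau^h})_{v\ge0}$ is also a Brownian motion independent of $\mathcal{F}_{\tau^h}$. Hence the left-hand side equals $\mathbb{P}\bigl(B^2_t-B^2_{\tau^h}\le x,\ \tau^h\le t\bigr)=\mathbb{P}\bigl(B^2_t\le x+h,\ \sup_{s\le t}B^2_s\ge h\bigr)$, and Lemma~\ref{lawsup} finishes. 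This is essentially your ``computational alternative'' without the explicit integration against the hitting-time density: the paper sidesteps the integral by swapping $B^1$ for $B^2$ at the level of the conditional expectation. What your concatenation buys is an explicit single Brownian motion witnessing the identity; what the paper's swap buys is brevity, since no new process needs to be checked.
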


\medskip
\begin{preuve}
Conditional on $\{t \geq \tau^h\}$, $B^1_t - B^1_{\tau^h}$ is a Brownian motion independent to $\mathcal{F}_{\tau^h}$. Then
\[\mathbb{P}\Bigl(B^1_t - B^1_{\tau^h} \leq x, \tau^h \leq t\Bigr) = \mathbb{E}\Bigl(\Phi\Bigl(\frac{x}{\sqrt{t-\tau^h}}\Bigr) {\bf1}_{t \geq \tau^h}\Bigr).\]
The same argument can be used to prove that 
\[\mathbb{P}\Bigl(B^2_t - B^2_{\tau^h} \leq x, \tau^h \leq t\Bigr) = \mathbb{E}\Bigl(\Phi\left(\frac{x}{\sqrt{t-\tau^h}}\right) {\bf1}_{t \geq \tau^h}\Bigr).\]
Then we have 
\begin{align*}
\mathbb{P}\Bigl(B^1_t - B^1_{\tau^h} \leq x, \tau^h \leq t\Bigr) &= \mathbb{P}\Bigl(B^2_t - B^2_{\tau^h} \leq x, \tau \leq t\Bigr)\\
&= \mathbb{P}\Bigl(B^2_t \leq x + h, \underset{s \leq t}{\sup} B^2_s \geq h\Bigr).
\end{align*}
We can conclude using Lemma \ref{lawsup}.
\end{preuve}

\begin{lemma} \label{intphi} Let a, b and x $\in \mathbb{R}$. We have:
\begin{enumerate}
\item[(i)] \[\int_{-\infty}^{x} \Phi\left(a u + b\right) \frac{e^{\frac{-u^2}{2}}}{\sqrt{2\pi}} du = \Phi_{\frac{-a}{\sqrt{a^2+1}}}\Bigl(\frac{b}{\sqrt{a^2+1}}, x\Bigr).\]
\item[(ii)] \[\Phi_{\sqrt{1-\rho^2}}\left(x,y\right) = \Phi\left(y\right)\Phi\Bigl(\frac{x-\sqrt{1-\rho^2}y}{\rho}\Bigr) + \Phi\left(x\right) - \Phi_{\rho}\Bigl(x,\frac{x-\sqrt{1-\rho^2}y}{\rho}\Bigr),\; x, y \in \mathbb{R},\; \rho > 0\]
\item[(iii)]  \[\Phi_{\rho}\left(x,y\right) = \Phi\left(y\right) - \Phi_{-\rho}\left(-x,y\right),\; x, y \in \mathbb{R}\] 
\end{enumerate}
\end{lemma}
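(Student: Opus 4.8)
The plan is to derive all three identities from elementary manipulations of a bivariate Gaussian vector, using only that $\Phi_\rho(a,b)=\mathbb{P}(Z_1\le a,\ Z_2\le b)$ for a centred Gaussian pair $(Z_1,Z_2)$ with unit variances and $\mathrm{Cov}(Z_1,Z_2)=\rho$, that such a law is invariant under swapping the two coordinates (so $\Phi_\rho(a,b)=\Phi_\rho(b,a)$), and that $(-Z_1,Z_2)$ then has the correlation-$(-\rho)$ law. All normals below may be taken on one probability space, independent when stated so; write $\phi=\Phi'$.

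For (i), let $U,V$ be independent standard normals. Since $\Phi(au+b)=\mathbb{P}(V\le au+b)$, Fubini gives $\int_{-\infty}^{x}\Phi(au+b)\,\phi(u)\,du=\mathbb{P}(U\le x,\ V-aU\le b)$. The vector $\bigl(U,\,(V-aU)/\sqrt{a^{2}+1}\bigr)$ is centred Gaussian with unit variances and correlation $-a/\sqrt{a^{2}+1}$, so this probability equals $\Phi_{-a/\sqrt{a^{2}+1}}\bigl(x,\ b/\sqrt{a^{2}+1}\bigr)$, which by symmetry of $\Phi_\rho$ in its arguments is the stated right-hand side.

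For (ii), fix $\rho\in(0,1)$, put $\tilde\rho=\sqrt{1-\rho^{2}}$, take independent standard normals $Z,W$, and set $Z_1=\tilde\rho Z+\rho W$; then $Z_1$ is standard normal with $\mathrm{Cov}(Z_1,Z)=\tilde\rho$ and $\mathrm{Cov}(Z_1,W)=\rho$, so $\Phi_{\tilde\rho}(a,b)=\mathbb{P}(Z_1\le a,\ Z\le b)$ and $\Phi_{\rho}(a,b)=\mathbb{P}(Z_1\le a,\ W\le b)$. With $z=(x-\tilde\rho y)/\rho$ one has $\{W\le z\}=\{\tilde\rho y+\rho W\le x\}$, and I would split $\Phi_\rho(x,z)=\mathbb{P}(Z_1\le x,\ W\le z)$ along $\{Z\le y\}$ versus $\{Z>y\}$. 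The key monotonicity is $\tilde\rho\ge 0$: on $\{Z\le y\}$ one has $Z_1\le\tilde\rho y+\rho W$, hence $\{W\le z,\ Z\le y\}\subseteq\{Z_1\le x\}$, which contributes $\mathbb{P}(W\le z,\ Z\le y)=\Phi(z)\Phi(y)$ by independence of $Z$ and $W$; while on $\{Z>y\}$ one has $Z_1>\tilde\rho y+\rho W$, so $\{Z_1\le x,\ Z>y\}\subseteq\{W\le z\}$ up to a null set, which contributes $\mathbb{P}(Z_1\le x,\ Z>y)=\Phi(x)-\Phi_{\tilde\rho}(x,y)$. Adding the two contributions and rearranging yields exactly (ii). (Alternatively one checks that both sides of (ii) have $x$-derivative $\phi(x)\,\Phi\bigl((y-\tilde\rho x)/\rho\bigr)$ and coincide as $x\to-\infty$.)

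For (iii), since $(-Z_1,Z_2)$ is centred Gaussian with unit variances and correlation $-\rho$, we get $\Phi_{-\rho}(-x,y)=\mathbb{P}(-Z_1\le-x,\ Z_2\le y)=\mathbb{P}(Z_1\ge x,\ Z_2\le y)=\Phi(y)-\mathbb{P}(Z_1<x,\ Z_2\le y)=\Phi(y)-\Phi_\rho(x,y)$, the last equality using $\mathbb{P}(Z_1=x)=0$; this is (iii). I expect part (ii) to be the only delicate point: one must pick the representation $Z_1=\tilde\rho Z+\rho W$ so that the same variable $Z_1$ sits simultaneously in a $\Phi_{\tilde\rho}$-pair and a $\Phi_{\rho}$-pair, and then spot the event comparison across the threshold $Z=y$. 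Parts (i) and (iii) are immediate once one standardizes, respectively reflects, one coordinate of a bivariate Gaussian.
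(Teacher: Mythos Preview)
Your proof is correct in all three parts. Part (i) is the standard probabilistic reading of the integral as a bivariate Gaussian probability after normalizing $V-aU$; part (iii) is the reflection $Z_1\mapsto -Z_1$; and your argument for part (ii) --- building $Z_1=\tilde\rho Z+\rho W$ so that $Z_1$ participates simultaneously in a correlation-$\tilde\rho$ pair with $Z$ and a correlation-$\rho$ pair with $W$, then splitting the event $\{Z_1\le x,\,W\le z\}$ along the threshold $\{Z\le y\}$ and using the monotonicity $\tilde\rho\ge 0$ to collapse each piece --- is clean and fully rigorous. The inclusion $\{W\le z,\,Z\le y\}\subseteq\{Z_1\le x\}$ and the reverse inclusion on $\{Z>y\}$ are both verified correctly, and the independence of $Z$ and $W$ is used exactly where needed.

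As for comparison with the paper: the paper does not actually prove this lemma, it simply refers the reader to \cite[Proof of Lemma 19, Section 5.3]{deschatre16}. Your argument therefore supplies what the present paper omits, and does so by purely probabilistic event manipulations rather than, say, differentiating both sides in $x$ (the alternative you mention parenthetically for (ii)). Either route works; the event-comparison proof you give has the advantage of making transparent why the particular combination $z=(x-\sqrt{1-\rho^2}\,y)/\rho$ appears.
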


\begin{preuve}
The reader is referred to \cite[Proof of Lemma 19, Section 5.3]{deschatre16}.
\end{preuve}

\subsection{Proof of Proposition \ref{survival2}} Let $B^1$ and $Z$ two independent Brownian motion. We consider $B^2 = \rho \tilde{B}^h + \sqrt{1-\rho^2}Z$ with $\tilde{B}^h$ the reflection of $B$ according to the barrier $h$. We have:
\[
\mathbb{P}\left(B^1_t - B^2_t \geq x\right) = \mathbb{P}\Bigl(B^1_t - B^2_t \geq x, \underset{s \leq t}{\sup \;} B^1_s \leq  h\Bigr) + \mathbb{P}\Bigl(B^1_t - B^2_t \geq x, \underset{s \leq t}{\sup \;} B^1_s \geq  h\Bigr)\\
\]
When $\underset{s \leq t}{\sup \;} B^1_s \leq  h$, $B^2_t = -\rho B^1_t + \sqrt{1-\rho^2} Z_t$ and when $\underset{s \leq t}{\sup \;} B^1_s \geq  h$, $B^2_t = \rho B^1_t -2 h \rho+ \sqrt{1-\rho^2} Z_t$. Thus, $\mathbb{P}\left(B^1_t - B^2_t \geq x\right)$ is the sum of the three following terms: 
\begin{enumerate}
\item[(i)] $\mathbb{P}\Bigl(B^1_t \leq \frac{x - 2\rho h + \sqrt{1-\rho^2}  Z_t}{\left(1-\rho\right)} , \underset{s \leq t}{\sup \;} B^1_s \leq  h\Bigr)$,
\item[(ii)] $- \mathbb{P}\Bigl(B^1_t \leq \frac{x + \sqrt{1-\rho^2}  Z_t}{\left(1+\rho\right)}, \underset{s \leq t}{\sup \;} B^1_s \leq  h\Bigr)$,
\item[(iii)] $ \mathbb{P}\Bigl(\left(1-\rho\right) B^1_t - \sqrt{1-\rho^2}  Z_t \geq x - 2\rho h\Bigr)$.
\end{enumerate}
Since $B^1$ and $Z$ are independent, (i) is equal to the sum of the three following terms:
\begin{equation} \label{exp1}
\mathbb{E}\Bigl( \Phi\Bigl(\frac{x -2\rho h + \sqrt{1-\rho^2}  Z_t}{\left(1-\rho\right)\sqrt{t}} \Bigr){\bf 1}_{Z_t \leq \frac{h\left(1+ \rho\right)-x}{\sqrt{1-\rho^2}}}\Bigr),
\end{equation}
\begin{equation} \label{exp2}
 - \mathbb{E}\Bigl(\Phi\Bigl( \frac{x -2h + \sqrt{1-\rho^2}  Z_t }{\left(1-\rho\right)\sqrt{t}} \Bigr) {\bf 1}_{Z_t \leq \frac{h\left(1+\rho\right)-x}{\sqrt{1-\rho^2}}}\Bigr) 
\end{equation} 
and
\begin{equation} \label{exp3}
\mathbb{E}\Bigl(\Bigl(2\Phi\Bigl( \frac{h}{\sqrt{t}} \Bigr) -1 \Bigr){\bf 1}_{Z_t \geq \frac{h\left(1+\rho\right)-x}{\sqrt{1-\rho^2}}}\Bigr)
\end{equation}
with the use of Lemma \ref{lawsup}. According to Lemma \ref{intphi} (i), \eqref{exp1} is equal to
\begin{equation} \label{1termnonsimplify} \Phi_{-\sqrt{\frac{1+\rho}{2}}}\Bigl(\frac{x - 2\rho h}{\sqrt{2\left(1-\rho\right)t}}, \frac{h\left(1+\rho\right)-x}{\sqrt{\left(1-\rho^2\right)t}}\Bigr).
\end{equation}
Using Lemma \ref{intphi} (ii), we find that the first term of (i) \eqref{1termnonsimplify} is equal to 
\begin{equation} \label{(i)1term}
-\Phi_{\sqrt{\frac{1+\rho}{2}}}\Bigl(\frac{-x+2 \rho h}{\sqrt{2\left(1-\rho\right)t}}, \frac{h\left(1+\rho\right)-x}{\sqrt{\left(1-\rho^2\right)t}}\Bigr) + \Phi\Bigl( \frac{h\left(1+\rho\right)-x}{\sqrt{\left(1-\rho^2\right)t}}\Bigr).
\end{equation}
In the same way, the second term of (i) \eqref{exp2} is equal to: 
\begin{equation} \label{(i)2term}\Phi_{\sqrt{\frac{1+\rho}{2}}}\Bigl(\frac{-x+2h}{\sqrt{2\left(1-\rho\right)t}}, \frac{h\left(1+\rho\right)-x}{\sqrt{\left(1-\rho^2\right)t}}\Bigr) - \Phi\Bigl( \frac{h\left(1+\rho\right)-x}{\sqrt{\left(1-\rho^2\right)t}}\Bigr). 
\end{equation}
The last one \eqref{exp3} is equal to 
\begin{equation} \label{(i)3term}
\Bigl(2\Phi\Bigl( \frac{h}{\sqrt{t}} \Bigr) -1 \Bigr)\Phi\Bigl( \frac{x-h\left(1+\rho\right)}{\sqrt{\left(1-\rho^2\right)t}}\Bigr).
\end{equation}
Using the same scheme of proof that for (i), we find that (ii) is equal to the sum of the three following terms:
\begin{equation} \label{(ii)1term1}
\Phi_{\sqrt{\frac{1-\rho}{2}}}\Bigl(\frac{-x}{\sqrt{2\left(1+\rho\right)t}}, \frac{h\left(1+\rho\right)-x}{\sqrt{\left(1-\rho^2\right)t}}\Bigr) - \Phi\Bigl(\frac{h\left(1+\rho\right)-x}{\sqrt{\left(1-\rho^2\right)t}}\Bigr),
\end{equation}
\begin{equation}
\label{(ii)2term1}-\Phi_{\sqrt{\frac{1-\rho}{2}}}\Bigl(\frac{-x+2h\left(1+\rho\right)}{\sqrt{2\left(1+\rho\right)t}}, \frac{h\left(1+\rho\right)-x}{\sqrt{\left(1-\rho^2\right)t}}\Bigr) + \Phi\Bigl(\frac{h\left(1+\rho\right)-x}{\sqrt{\left(1-\rho^2\right)t}}\Bigr)
\end{equation}
and
\begin{equation} \label{(ii)3term}
-\Bigl(2\Phi\Bigl( \frac{h}{\sqrt{t}} \Bigr) -1 \Bigr)\Phi\Bigl( \frac{x-h\left(1+\rho\right)}{\sqrt{\left(1-\rho^2\right)t}}\Bigr).
\end{equation}
Using Lemma \ref{intphi} (iii), we find that \eqref{(ii)1term1} is equal to
\begin{equation} \label{(ii)1term}
\Phi\Bigl(\frac{-x}{\sqrt{2\left(1+\rho\right)t}}\Bigr) \Phi\Bigl(\frac{-x+2 h}{\sqrt{2\left(1-\rho\right)t}}\Bigr) -\Phi_{\sqrt{\frac{1+\rho}{2}}}\Bigl(\frac{-x+2h}{\sqrt{2\left(1-\rho\right)t}}, \frac{h\left(1+\rho\right)-x}{\sqrt{\left(1-\rho^2\right)t}}\Bigr) .
\end{equation}
and that \eqref{(ii)2term1} to 
\begin{equation} \label{(ii)2term}
- \Phi\Bigl(\frac{-x+2h\left(1+\rho\right)}{\sqrt{2\left(1+\rho\right)t}}\Bigr) \Phi\Bigl(\frac{-x+2\rho h}{\sqrt{2\left(1+\rho\right)t}}\Bigr) + \Phi_{\sqrt{\frac{1+\rho}{2}}}\Bigl(\frac{-x+2\rho h}{\sqrt{2\left(1-\rho\right)t}},   \frac{h\left(1+\rho\right)-x}{\sqrt{\left(1-\rho^2\right)t}}\Bigr).
\end{equation}
Finally, we have (iii) equal to 
\begin{equation}\label{(iii)term}
\Phi\Bigl(\frac{-x+2\rho h}{\sqrt{2\left(1-\rho\right)t}}\Bigr).
\end{equation}
$\mathbb{P}\left(B^1_t - B^2_t \geq x\right)$ is the sum of \eqref{(i)1term}, \eqref{(i)2term}, \eqref{(i)3term}, \eqref{(ii)1term}, \eqref{(ii)2term}, \eqref{(ii)3term} and \eqref{(iii)term}.

\subsection{Proof of Proposition \ref{model}}

\noindent (i) This part of the proof can be done by induction.

\smallskip
\noindent (ii) For $\tau_{0} = 0  \leq t \leq \tau_{1}$, $X_t - Y^n_t =  \left(1+\rho\right)B^X_t - \sqrt{1-\rho^2}B^Y_t$. The equality holds for k = 0.

\smallskip
Let us suppose that the property true at rank $k < n + 1$, that is 
\[X_t - Y^n_t = \left(1+\left(-1\right)^k\rho\right)\left(B^X_t - B^X_{\tau_{k}}\right) - \sqrt{1-\rho^2}\left(B^Y_t - B^Y_{\tau_{k}}\right) + \alpha_k, \quad \tau_{k}  \leq t \leq \tau_{k+1}. \]
If $\tau_{k}  \leq t \leq \tau_{k+1}$, $Y^n_t = \rho\tilde{B}^k_t + \sqrt{1-\rho^2} B^Y_t$ then 
\begin{equation} \label{spreadvaluepart1} X_t - \rho\tilde{B}^k_t - \sqrt{1-\rho^2} B^Y_t = \left(1+\left(-1\right)^k\rho\right)\left(B^X_t - B^X_{\tau_{k}}\right) - \sqrt{1-\rho^2}\left(B^Y_t - B^Y_{\tau_{k}}\right) + \alpha_k. 
\end{equation}
As $\tilde{B}^k_t$ does not change after time $\tau_{k+1}$, this relationship remains true for all time greater than $\tau_{k}$.

\noindent At time $\tau_{k+1}$, we have the equation
\begin{equation}  \label{spreadvaluepart2} \alpha_{k+1} = \left(1+\left(-1\right)^k\rho\right)\left(B^X_{\tau_{k+1}} - B^X_{\tau_{k}}\right) - \sqrt{1-\rho^2}\left(B^Y_{\tau_{k+1}} - B^Y_{\tau_{k}}\right) + \alpha_k.
\end{equation}
Taking the difference between Equation \eqref{spreadvaluepart1} and Equation \eqref{spreadvaluepart2}, we have 
\begin{equation*} X_t - \rho\tilde{B}^k_t - \sqrt{1-\rho^2} B^Y_t = \left(1+\left(-1\right)^k\rho\right)\left(B^X_t - B^X_{\tau_{k+1}}\right) - \sqrt{1-\rho^2}\left(B^Y_t - B^Y_{\tau_{k+1}}\right) + \alpha_{k+1}.
\end{equation*}
Let $\tau_{k+1}  \leq t \leq \tau_{k+2}$. If $k = n$, the proof is over because $Y^n_t = \rho\tilde{B}^n_t + \sqrt{1-\rho^2} B^Y_t$ for $\tau_{n+1}$. Otherwise, $Y^n_t = \rho \tilde{B}^{k+1}_t + \sqrt{1-\rho^2}B^Y_{t}$ with $\tilde{B}^{k+1}_t = \mathcal{R}\left(\tilde{B}^{k}_t, {\tau_{k+1}}\right) = 2\tilde{B}^{k}_{\tau_{k+1}} - \tilde{B}^{k}_t$ and 
\begin{align*} 
X_t - Y^n_t &= X_t - \rho \tilde{B}^{k+1}_t - \sqrt{1-\rho^2}B^Y_{t}\\
&= X_t - \rho \tilde{B}^{k}_t - \sqrt{1-\rho^2}B^Y_{t} + \rho(\tilde{B}^{k}_{t} - \tilde{B}^{k+1}_{t})\\
&= X_t - \rho \tilde{B}^{k}_t - \sqrt{1-\rho^2}B^Y_{t} + 2\rho(\tilde{B}^{k}_{t} - \tilde{B}^{k}_{\tau_{k+1}})\\
&= \left(1+\left(-1\right)^k\rho\right)\left(B^X_t - B^X_{\tau_{k+1}}\right) - \sqrt{1-\rho^2}\left(B^Y_t - B^Y_{\tau_{k+1}}\right) + \alpha_{k+1} + 2\rho\left(\tilde{B}^{k}_{t} - \tilde{B}^{k}_{\tau_{k+1}}\right).
\end{align*}
Let $s, t > \tau_k$, we have 
\begin{align*}
\tilde{B}^k_t - \tilde{B}^k_s &= -\tilde{B}^{k-1}_t + 2\tilde{B}^{k-1}_{\tau_{k}} + \tilde{B}^{k-1}_s - 2\tilde{B}^{k-1}_{\tau_{k}} \\
&= - \left(\tilde{B}^{k-1}_t - \tilde{B}^{k-1}_s\right) = \left(-1\right)^k(\tilde{B}^0_t-\tilde{B}^{0}_s)\\ &
= \left(-1\right)^{k+1}\left(B^X_t - B^X_s\right).
\end{align*}
Then $2\rho\left(\tilde{B}^{k}_{t} - \tilde{B}^{k}_{\tau_{k+1}}\right) = 2\rho\left(-1\right)^{k+1}\left(B^X_t - B^X_{\tau_{k+1}}\right)$ and we find that the property holds at rank $k+1$, which achieves the proof.

\smallskip
\noindent (iii) We first need Lemma \ref{lawtaumodel}. 

\begin{lemma} \label{lawtaumodel}
We have \begin{equation}
\label{lawtau}
\tau_{k}  \overset{d}{=}  \inf\{t \geq 0 : B_t = u_k\}
\end{equation}
where
\begin{equation} 
\left\lbrace
\begin{array}{l}
u_0 = 0\\
u_{k} = \frac{\eta}{\sqrt{2\left(1+\rho\right)}} + \frac{\left(\eta - \nu\right)}{\sqrt{2}}\Bigl(\frac{\lfloor \frac{k}{2} \rfloor}{\sqrt{1-\rho}} + \frac{\lfloor \frac{k-1}{2} \rfloor}{\sqrt{1+\rho}}\Bigr) \quad k \geq 1  \\
\end{array}\right. 
\end{equation}
with $B$ a standard Brownian motion and $\lfloor . \rfloor$ the floor function.
\end{lemma}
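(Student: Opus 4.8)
The plan is to recognise the reflection times $\tau_k$ as a sum of $k$ independent first-passage times of a standard Brownian motion across positive levels, and then to collapse that sum using the additivity of the first-passage-time process.

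\emph{From Proposition \ref{model} to the increments.} I would fix $k\ge0$ and apply Proposition \ref{model} (ii) with $n=k$: on $[\tau_k,\tau_{k+1}]$ one has $B^X_t-Y^k_t=\alpha_k+M^k_t$, where $M^k_t:=(1+(-1)^k\rho)(B^X_t-B^X_{\tau_k})-\sqrt{1-\rho^2}(B^Y_t-B^Y_{\tau_k})$. A one-line computation of the bracket gives $\langle M^k\rangle_t-\langle M^k\rangle_{\tau_k}=\bigl((1+(-1)^k\rho)^2+(1-\rho^2)\bigr)(t-\tau_k)=2(1+(-1)^k\rho)(t-\tau_k)$ for $t\ge\tau_k$, so by L\'evy's characterisation $W^k_s:=M^k_{\tau_k+s}/\sqrt{2(1+(-1)^k\rho)}$ is a standard Brownian motion; by the strong Markov property of $(B^X,B^Y)$ at the stopping time $\tau_k$ (Proposition \ref{model} (i)), $W^k$ is independent of $\mathcal F_{\tau_k}$. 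Since $\tau_{k+1}=\inf\{t\ge\tau_k:B^X_t-Y^k_t=\alpha_{k+1}\}$ and $B^X_{\tau_k}-Y^k_{\tau_k}=\alpha_k$, this rewrites as $\tau_{k+1}-\tau_k=\inf\{s\ge0:W^k_s=(\alpha_{k+1}-\alpha_k)/\sqrt{2(1+(-1)^k\rho)}\}$, a first-passage time whose law, by the symmetry $B\overset{d}{=}-B$, depends only on $|\alpha_{k+1}-\alpha_k|$; hence $\tau_{k+1}-\tau_k\overset{d}{=}T_{a_{k+1}}$, with $T_a:=\inf\{t\ge0:B_t=a\}$ and $a_{k+1}:=|\alpha_{k+1}-\alpha_k|/\sqrt{2(1+(-1)^k\rho)}$. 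Moreover $\tau_{k+1}-\tau_k$ is a measurable functional of $W^k$ alone, hence independent of $\mathcal F_{\tau_k}$ and in particular of $(\tau_1,\dots,\tau_k)$; an induction on $k$ then yields that the increments $\tau_1,\tau_2-\tau_1,\dots$ are independent with $\tau_j-\tau_{j-1}\overset{d}{=}T_{a_j}$ (and each $\tau_j$ is a.s.\ finite, a Brownian motion hitting every level).

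\emph{Summation and identification of the level.} I would then invoke the fact that $(T_a)_{a\ge0}$ has stationary independent increments — for independent copies one has $T_a+T_b\overset{d}{=}T_{a+b}$, itself a consequence of the strong Markov property of $B$ at $T_a$. Iterating, $\tau_k=\sum_{j=1}^k(\tau_j-\tau_{j-1})\overset{d}{=}\sum_{j=1}^kT_{a_j}\overset{d}{=}T_{u_k}$ with $u_k:=\sum_{j=1}^k a_j$. It remains to evaluate $u_k$. With $\alpha_0=0$, $\alpha_j=\eta$ for odd $j$, $\alpha_j=\nu$ for even $j\neq0$, and $\nu<\eta$, one reads off $a_1=\eta/\sqrt{2(1+\rho)}$ and, for $j\ge2$, $a_j=(\eta-\nu)/\sqrt{2(1-\rho)}$ if $j$ is even (then $(-1)^{j-1}=-1$) and $a_j=(\eta-\nu)/\sqrt{2(1+\rho)}$ if $j$ is odd. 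Among $\{2,\dots,k\}$ there are $\lfloor k/2\rfloor$ even indices and $\lfloor(k-1)/2\rfloor$ odd ones, whence $u_k=\frac{\eta}{\sqrt{2(1+\rho)}}+\frac{\eta-\nu}{\sqrt2}\bigl(\frac{\lfloor k/2\rfloor}{\sqrt{1-\rho}}+\frac{\lfloor(k-1)/2\rfloor}{\sqrt{1+\rho}}\bigr)$, which is the claimed value.

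The only delicate point is making sure a reflection does not spoil the independence of the noise driving $B^X-Y^k$ on the new segment; this is automatic here because Proposition \ref{model} (ii) has already expressed $B^X-Y^k$ on $[\tau_k,\tau_{k+1}]$ directly in terms of the post-$\tau_k$ increments of the original Brownian motions $B^X,B^Y$, so the strong Markov property applies verbatim. Everything else is the additivity of first-passage times plus bookkeeping with the floor function.
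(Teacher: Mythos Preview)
Your proof is correct, and it takes a cleaner route than the paper's. The paper argues by induction directly on the distribution function: assuming $\tau_k\overset{d}{=}T_{u_k}$, it writes
\[
\mathbb{P}(\tau_{k+1}\le t)=\mathbb{E}\Bigl(2\Phi\Bigl(\tfrac{\nu-\eta}{\sqrt{2(1-\rho)(t-\tau_k)}}\Bigr){\bf1}_{t\ge\tau_k}\Bigr)
\]
(using Lemma~\ref{lawinf} for the inner conditional probability), then evaluates this expectation via Lemma~\ref{stoppingtime} and recognises the result as the hitting-time CDF for the level $u_{k+1}=u_k+(\eta-\nu)/\sqrt{2(1-\rho)}$ (odd $k$; even $k$ is symmetric). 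You instead work at the level of random variables: you isolate the independent increments $\tau_{j}-\tau_{j-1}\overset{d}{=}T_{a_j}$ and collapse their sum using the subordinator additivity $T_a+T_b\overset{d}{=}T_{a+b}$, so the whole induction reduces to the arithmetic identity $u_k=\sum_{j\le k}a_j$. Your approach makes the independence of the inter-reflection times explicit and generalises immediately to arbitrary barrier sequences, at the cost of invoking the additivity of Brownian first-passage times as a black box; the paper's version stays within its own preliminary lemmas and produces the explicit CDF that is reused downstream in Lemma~\ref{lawspreadtau}.
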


\begin{proof}The property holds for $k = 1$.
\medskip
Let us suppose that the property holds for $k = 2p + 1$. $X_{\tau_{k}} - Y^n_{\tau_{k}} = \eta$ and $\tau_{k+1}$ is the first time greater than $\tau_k$ when $X_t - Y^n_t$ goes to $\nu$. According to Equation \eqref{valuespread}, 
\[\mathbb{P}\left(\tau_{k+1} \leq t\right) =  \mathbb{P}\Bigl(\underset{\tau_{k} \leq s \leq t}{\inf} \left(1-\rho\right)\left(B^X_s - B^X_{\tau_{k}}\right) - \sqrt{1-\rho^2}\left(B^Y_s - B^Y_{\tau_{k}}\right) + \eta \leq \nu, t \geq {\tau_{k}}\Bigr).\]
If $t \geq \tau_{k}$, $(B^X_t - B^X_{\tau_{k}})$ and $(B^Y_t - B^Y_{\tau_{k}})$ are Brownian motions independent of $\mathcal{F}_{\tau_{k}}$. Then using Lemma \ref{lawinf} and Lemma \ref{stoppingtime}, we have
\begin{align*}
\mathbb{P}\left(\tau_{k+1} \leq t\right) &= \mathbb{E}\Bigl(2 \Phi\Bigl(\frac{\nu-\eta}{\sqrt{2\left(1-\rho\right)\left(t - \tau_{k}\right)}}\Bigr) {\bf1}_{t \geq \tau_{k}}\Bigr) \\
& = 2 \mathbb{P}\left(\left(1-\rho\right)\left(B^X_t - B^X_{\tau_{k}}\right) - \sqrt{1-\rho^2}\left(B^Y_t - B^Y_{\tau_{k}}\right) \leq \nu - \eta, t \geq \tau_{k}\right)\\
& = 2 \Phi\Bigl(\frac{\nu - \eta}{\sqrt{2\left(1-\rho\right)t}} - u_k\Bigr).
\end{align*}
This is the law of the stopping time  $\tau = \inf\{t \geq 0 : B_t = u_k + \frac{\eta - \nu}{\sqrt{2(1-\rho)}}\}$ and the property holds for $k+1$. The proof is similar for $k = 2p$.
\end{proof}

The proof of (iii) can be done. $\{N_t = n \} = \{\tau_n \leq t, \tau_{n+1} > t\}$ and then we have
\begin{align*}
\mathbb{E}\left(N_t\right) = \sum_{n = 1}^{\infty} n  \mathbb{P}\left(\tau_n \leq t, \tau_{n+1} > t \right)
& \leq \sum_{n = 1}^{\infty} n \mathbb{P}\left(\tau_n \leq t \right) 
\end{align*}
According to Lemma \ref{lawtaumodel}, $\mathbb{P}\left(\tau_n \leq t \right)  = 2\int_{\frac{u_n}{\sqrt{t}}}^{\infty} \frac{e^{\frac{-y^2}{2}}}{\sqrt{2\pi}} dy = 2\Phi\left(\frac{-u_n}{\sqrt{t}}\right)$. Since $\lim \limits_{n \to \infty} u_n = \infty$ and $n = \underset{n \to \infty}{O}\left(u_n\right)$,
\[
\mathbb{P}\left(\tau_n \leq t \right) = \underset{n \to \infty}{o}\Bigl(e^{\frac{-u_n^2}{2t}}\Bigr) = \underset{n \to \infty}{o}\Bigl(\frac{1}{u_n^3}\Bigr) = \underset{n \to \infty}{O}\Bigl(\frac{1}{n^3}\Bigr). 
\]
Then $n\mathbb{P}\left(\tau_n \leq t \right) = \underset{n \to \infty}{O}\left(\frac{1}{n^2}\right)$ and $\mathbb{E}\left(N_t\right) < \infty$ by comparison theorem of positive series, implying $N_t < \infty$ almost surely.

\smallskip
\noindent (iv) If $n \geq N_t$, the number of reflections of $X-Y^n$ between time $0$ and time $t$ is equal to $N_t$ and $Y^{N_t}_t = Y^n_t = $ almost surely.

\subsection{Proof of Proposition \ref{spreaddistrib} and Corollary \ref{convergencesurvival}}

We start with Lemma \ref{lawspreadtau}.
\begin{lemma} \label{lawspreadtau} For $t > 0$, $x \in \mathbb{R}$, 
\[
\mathbb{P}\left(X_t - Y^{n}_t \leq x, t \geq \tau_{n}\right) = 
\left \lbrace
\begin{array}{ccc}
\Phi\Bigl(\frac{x - \alpha_n}{\sqrt{2\left(1+\left(-1\right)^n\rho\right)t}} - \frac{u_{n}}{\sqrt{t}}\Bigr) & \mbox{if} & x < \alpha_n \\
\Phi\Bigl(\frac{x - \alpha_n}{\sqrt{2\left(1+\left(-1\right)^n\rho\right)t}} + \frac{u_{n}}{\sqrt{t}}\Bigr) - 2\Phi\left(\frac{u_{n}}{\sqrt{t}}\right) + 1 & \mbox{if} & x \geq \alpha_n \\
\end{array}\right.
\]
and 
\[
\mathbb{P}\left(X_t - Y^{n}_t \leq x, t \geq \tau_{n+1}\right)  = 
\left\lbrace
\begin{array}{ccc}
 \Phi\Bigl(\frac{x - \alpha_{n+1}}{\sqrt{2\left(1+(-1)^n\rho\right)t}} - \frac{u_{n+1}}{\sqrt{t}}\Bigr) & \mbox{if} & x < \alpha_{n+1} \\
 \Phi\Bigl(\frac{x - \alpha_{n+1}}{\sqrt{2\left(1+\left(-1\right)^n\rho\right)t}} + \frac{u_{n+1}}{\sqrt{t}}\Bigr) - 2\Phi\Bigl(\frac{u_{n+1}}{\sqrt{t}}\Bigr) + 1& \mbox{if} & x \geq \alpha_{n+1} \\
\end{array}\right.. 
\]
\end{lemma}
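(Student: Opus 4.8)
The plan is to reduce both identities to the explicit first-passage formula of Lemma \ref{lawsup}~(i), by combining the representation of $X_t-Y^n_t$ after $\tau_n$ (resp. after $\tau_{n+1}$) from Proposition \ref{model}~(ii) with the strong Markov property and the law of $\tau_n$ (resp. $\tau_{n+1}$) from Lemma \ref{lawtaumodel}.

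First I would record the algebraic identity $\bigl(1+(-1)^n\rho\bigr)^2+\bigl(1-\rho^2\bigr)=2\bigl(1+(-1)^n\rho\bigr)$ and set $\sigma:=\sqrt{2\bigl(1+(-1)^n\rho\bigr)}$. By Proposition \ref{model}~(ii) — which one checks holds for \emph{all} $t\ge\tau_n$, not merely for $\tau_n\le t\le\tau_{n+1}$, because $Y^n=\rho\tilde B^n+\sqrt{1-\rho^2}B^Y$ and $\tilde B^n$ undergoes no reflection after $\tau_n$ — one has
\[X_t-Y^n_t=\bigl(1+(-1)^n\rho\bigr)\bigl(B^X_t-B^X_{\tau_n}\bigr)-\sqrt{1-\rho^2}\bigl(B^Y_t-B^Y_{\tau_n}\bigr)+\alpha_n\qquad(t\ge\tau_n),\]
and the analogous identity with $\tau_{n+1},\alpha_{n+1}$ in place of $\tau_n,\alpha_n$ for $t\ge\tau_{n+1}$, with the \emph{same} coefficient $1+(-1)^n\rho$ since $Y^n$ does not reflect at $\tau_{n+1}$. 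Since $\tau_n$ is an $(\mathcal F_t)$-stopping time (Proposition \ref{model}~(i)) and $(B^X,B^Y)$ is a planar Brownian motion, the strong Markov property gives that, conditionally on $\mathcal F_{\tau_n}$ and on $\{\tau_n\le t\}$, the random variable $\bigl(1+(-1)^n\rho\bigr)\bigl(B^X_t-B^X_{\tau_n}\bigr)-\sqrt{1-\rho^2}\bigl(B^Y_t-B^Y_{\tau_n}\bigr)$ is centred Gaussian with variance $\sigma^2(t-\tau_n)$; taking conditional probabilities and integrating,
\[\mathbb{P}\bigl(X_t-Y^n_t\le x,\ t\ge\tau_n\bigr)=\mathbb{E}\Bigl[\Phi\Bigl(\tfrac{x-\alpha_n}{\sigma\sqrt{t-\tau_n}}\Bigr){\bf1}_{\tau_n\le t}\Bigr].\]

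The right-hand side is of the form $\mathbb{E}[g(\tau_n)]$ for a fixed Borel function $g$, hence depends on $\tau_n$ only through its law; by Lemma \ref{lawtaumodel} I may therefore replace $\tau_n$ by $T:=\inf\{s\ge0:B_s=u_n\}$ for a standard Brownian motion $B$. Applying the strong Markov property to $B$ at $T$, on $\{T\le t\}$ the increment $B_t-B_T$ is $\mathcal N(0,t-T)$ and independent of $\mathcal F^{B}_{T}$, so $\Phi\bigl(\tfrac{x-\alpha_n}{\sigma\sqrt{t-T}}\bigr)=\mathbb{P}\bigl(\sigma(B_t-B_T)\le x-\alpha_n\,\big|\,\mathcal F^{B}_{T}\bigr)$ there, and since $\{T\le t\}\in\mathcal F^{B}_{T}$,
\[\mathbb{P}\bigl(X_t-Y^n_t\le x,\ t\ge\tau_n\bigr)=\mathbb{P}\bigl(\sigma(B_t-B_T)\le x-\alpha_n,\ T\le t\bigr)=\mathbb{P}\Bigl(B_t\le\tfrac{x-\alpha_n}{\sigma}+u_n,\ \sup_{s\le t}B_s\ge u_n\Bigr),\]
where I used $B_T=u_n$ on $\{T\le t\}$, divided through by $\sigma$, and noted $\{T\le t\}=\{\sup_{s\le t}B_s\ge u_n\}$. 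This is the same manoeuvre already used in the proof of Lemma \ref{stoppingtime}.

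It then remains to evaluate $\mathbb{P}\bigl(B_t\le z,\ \sup_{s\le t}B_s\ge u_n\bigr)$ with $z:=\tfrac{x-\alpha_n}{\sigma}+u_n$, which by Lemma \ref{lawsup}~(i) equals $\Phi(z/\sqrt t)-\mathbb{P}\bigl(B_t\le z,\ \sup_{s\le t}B_s\le u_n\bigr)$; the dichotomy $z<u_n$ versus $z\ge u_n$ there is precisely $x<\alpha_n$ versus $x\ge\alpha_n$, and a one-line rearrangement (using $z-2u_n=\tfrac{x-\alpha_n}{\sigma}-u_n$, $z=\tfrac{x-\alpha_n}{\sigma}+u_n$ and $\sigma=\sqrt{2(1+(-1)^n\rho)}$) yields the first displayed formula of the lemma. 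The second formula follows by running the identical argument from $\tau_{n+1}$: Proposition \ref{model}~(ii) supplies the representation of $X_t-Y^n_t$ for $t\ge\tau_{n+1}$ (same $\sigma$, constant $\alpha_{n+1}$), Lemma \ref{lawtaumodel} gives $\tau_{n+1}\overset{d}{=}\inf\{s\ge0:B_s=u_{n+1}\}$, and $\alpha_n,u_n$ are merely replaced by $\alpha_{n+1},u_{n+1}$ throughout. I expect the only genuinely delicate points to be the passage from $\tau_n$ to $T$ together with the strong Markov conditioning (exactly the device of Lemma \ref{stoppingtime}), and checking that the representation of Proposition \ref{model}~(ii) persists for all $t\ge\tau_n$; everything else is the routine algebraic rearrangement of the $\Phi$-terms.
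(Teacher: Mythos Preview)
Your proposal is correct and follows essentially the same route as the paper: represent $X_t-Y^n_t$ via Proposition~\ref{model}~(ii), condition to obtain $\mathbb{E}\bigl[\Phi\bigl(\tfrac{x-\alpha_n}{\sigma\sqrt{t-\tau_n}}\bigr){\bf1}_{\tau_n\le t}\bigr]$, replace $\tau_n$ by a first-passage time of a standard Brownian motion via Lemma~\ref{lawtaumodel}, and evaluate. The only cosmetic difference is that the paper invokes Lemma~\ref{stoppingtime} directly at the last step, whereas you unroll that lemma's proof inline (strong Markov for $B$ at $T$, then Lemma~\ref{lawsup}~(i)); you yourself note this is the same device, and your explicit check that the representation of $X_t-Y^n_t$ persists for all $t\ge\tau_n$ is a helpful clarification.
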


\medskip
\begin{preuve}
We have:
\begin{align*}
\mathbb{P}\left(X_t - Y^{n}_t \leq x, t \geq \tau_{n}\right) &= \mathbb{P}\left(\left(1+\left(-1\right)^n\rho\right)\left(B^X_t - B^X_{\tau_{n}}\right) - \sqrt{1-\rho^2}\left(B^Y_t - B^Y_{\tau_{n}}\right) + \alpha_n \leq x, t \geq \tau_{n}\right) \\
&= \mathbb{E}\Bigl(\Phi\Bigl(\frac{x-\alpha_n}{\sqrt{2\left(1+\left(-1\right)^n\rho\right)\left(t-\tau_{n}\right)}}\Bigr) {\bf1}_{t \geq \tau_{n}}\Bigr).
\end{align*}
However, according to Equation \eqref{lawtau}, $\tau_{n} \sim \tau' = \inf\{t \geq 0 : B_t = u_n\}$ where $B_t$ is a standard Brownian motion. Then we have, using Lemma \ref{stoppingtime},

\begin{align*}
\mathbb{P}\left(X_t - Y^{n}_t \leq x, t \geq \tau_{n}\right) &= \mathbb{E}\Bigl(\Phi\Bigl(\frac{x-\alpha_n}{\sqrt{2\left(1+\left(-1\right)^n\rho\right)\left(t-\tau'\right)}}\Bigr) {\bf1}_{t \geq \tau'}\Bigr)\\
& =  \Phi\Bigl(\frac{x - \alpha_n}{\sqrt{2\left(1+\left(-1\right)^n\rho\right)t}} - \frac{u_{n}}{\sqrt{t}}\Bigr){\bf1}_{x < \alpha_n} \\
&+ \Bigl(\Phi\Bigl(\frac{x - \alpha_n}{\sqrt{2\left(1+\left(-1\right)^n\rho\right)t}} - \frac{u_{n}}{\sqrt{t}}\Bigr) - 2\Phi\left(\frac{u_{n}}{\sqrt{t}}\right) + 1\Bigr){\bf1}_{x \geq \alpha_n}.
\end{align*}

\medskip
The proof is the same for $\mathbb{P}\left(X_t - Y^{n}_t \leq x, t \geq \tau_{n+1}\right)$.

\end{preuve}

We can now prove Proposition \ref{spreaddistrib}. We have:
\[
\mathbb{P}\left(X_t - Y^{n+1}_t \geq x\right) - \mathbb{P}\left(X_t - Y^{n}_t \geq x\right) = \mathbb{P}\left(X_t - Y^n_t \leq x\right) - \mathbb{P}\left(X_t - Y^{n+1}_t \leq x\right)\]
which is equal to 
\begin{align*}
&\mathbb{P}\left(X_t - Y^n_t \leq x, \tau_{n+1} \leq t\right) - \mathbb{P}\left(X_t - Y^{n+1}_t \leq x, \tau_{n+1} \leq t\right) \\
&+ \mathbb{P}\left(X_t - Y^n_t \leq x, \tau_{n+1} \geq t\right) - \mathbb{P}\left(X_t - Y^{n+1}_t \leq x, \tau_{n+1} \leq t\right).
\end{align*}
For $\tau_{n+1} \geq t$, $X_t - Y^n_t$ and $X_t - Y^{n+1}_t$ are equals then \[\mathbb{P}\left(X_t - Y^n_t \leq x, \tau_{n+1} \geq t\right) = \mathbb{P}\left(X_t - Y^{n+1}_t \leq x, \tau_{n+1} \geq t\right).\] 
We then have
\[\mathbb{P}\left(X_t - Y^{n+1}_t \geq x\right) - \mathbb{P}\left(X_t - Y^{n}_t \geq x\right) = \mathbb{P}\left(X_t - Y^n_t \leq x, \tau_{n+1} \leq t\right) - \mathbb{P}\left(X_t - Y^{n+1}_t \leq x, \tau_{n+1} \leq t\right)\]
and  we can conclude using Lemma \ref{lawspreadtau}.

\smallskip
Since for $n \geq N_t$ $X_t - Y^n_t = X_t - Y_t^{N_t}$, $X_t - Y_t^{N_t}$ is the limit in law of $X_t - Y^n_t$, and $\mathbb{P}\left(X_t - Y_t \geq x \right) = \lim \limits_{n \to \infty} \mathbb{P}\left(X_t - Y^n_t \geq x\right)$.

\medskip
The proof for Corollary \ref{convergencesurvival} follows. 

\smallskip
Let $x \in \left[\nu, \eta\right]$ and let assume $\rho > 0$. We have:
\[\mathbb{P}\left(X_t - Y^{n+1}_t \geq x\right) - \mathbb{P}\left(X_t - Y^{n}_t \geq x\right) = p_{n+1}\left(t,x\right).\]
If $n$ is even, 
\[ p_{n+1}\left(t, x\right) = \Phi\Bigl(\frac{x-\eta}{\sqrt{2\left(1+\rho\right)t}} - \frac{u_{n+1}}{\sqrt{t}}\Bigr) - \Phi\Bigl(\frac{x-\eta}{\sqrt{2\left(1-\rho\right)t}} - \frac{u_{n+1}}{\sqrt{t}}\Bigr) > 0. \]
If $n$ is odd, 
\[p_{n+1}\left(t, x\right) = \Phi\Bigl(\frac{x-\nu}{\sqrt{2\left(1-\rho\right)t}} + \frac{u_{n+1}}{\sqrt{t}}\Bigr) - \Phi\Bigl(\frac{x-\nu}{\sqrt{2\left(1+\rho\right)t}} + \frac{u_{n+1}}{\sqrt{t}}\Bigr) > 0, \]
which achieves the proof. 

\subsection{Proof of Proposition \ref{solutionsde}}

As $\tilde{\rho}$ is Lipschitz and $\underset{x \in \mathbb{R}}{\sup} |\tilde{\rho}\left(x\right)| < 1$, $\sqrt{1-\tilde{\rho}^2}$ is Lipschitz on $\mathbb{R}$ and \\ $\left(x,y\right) \mapsto \begin{pmatrix}
   1 & 0 \\
   \tilde{\rho}\left(x-y\right) & \sqrt{1-\tilde{\rho}\left(x-y\right)^2}
\end{pmatrix}$
is Lipschitz on $\mathbb{R}^2$, which is a sufficient condition for the system to have a strong solution. This solution is Markovian. 

\smallskip
$X$ is clearly a Brownian motion. By the L\'evy characterization of the Brownian motion, $Y$ is also a Brownian motion.

\vip

\paragraph{\bf Acknowledgements.} I am grateful to Olivier F\'eron and Marc Hoffmann for helpful discussion and comments. This research is supported by the department OSIRIS (Optimization, SImulation, RIsk and Statistics for Energy Markets) of EDF in the context of a CIFRE contract and by FiME (Finance for Energy Markets) Research Initiative. I thank the referees for valuable comments improving the paper considerably.


\end{document}